%
%
%


\makeatletter

\def\eqalign#1{\,\vcenter{\openup\jot\m@th
  \ialign{\strut\hfil$\displaystyle{##}$&$\displaystyle{{}##}$\hfil
      \crcr#1\crcr}}\,}
\def\eqalignno#1{\displ@y \tabskip\@centering
  \halign to\displaywidth{\hfil$\displaystyle{##}$\tabskip\z@skip
    &$\displaystyle{{}##}$\hfil\tabskip\@centering
    &\llap{$##$}\tabskip\z@skip\crcr
    #1\crcr}}
\def\leqalignno#1{\displ@y \tabskip\@centering
  \halign to\displaywidth{\hfil$\displaystyle{##}$\tabskip\z@skip
    &$\displaystyle{{}##}$\hfil\tabskip\@centering
    &\kern-\displaywidth\rlap{$##$}\tabskip\displaywidth\crcr
    #1\crcr}}

\makeatother

\newdimen\pixel \pixel=.00333333 true in

\documentclass[11pt]{article}
\usepackage{epsfig}
\usepackage{fullpage}




\makeatletter
\def\bigpar{\bigbreak\@afterindentfalse\@afterheading\ignorespaces}
\def\medpar{\medbreak\@afterindentfalse\@afterheading\ignorespaces}
\def\smallpar{\smallbreak\@afterindentfalse\@afterheading\ignorespaces}

\newlength{\saveindent}
\newenvironment{proof}%
      {\bigpar{\bf Proof:}\ 
             \setlength{\saveindent}{\parindent} 
                       \ignorespaces}%
      {\stopproof\ignorespaces\bigbreak \setlength{\parindent}{\saveindent}}

      {\bigpar{\bf Proof:}%
             \setlength{\saveindent}{\parindent} 
                       \,\ignorespaces}%
      {\ignorespaces\bigbreak \setlength{\parindent}{\saveindent}}

      {\bigpar{\bf Proof:}\ %
             \setlength{\saveindent}{\parindent} 
                       \ignorespaces}%
      {\ignorespaces\bigbreak \setlength{\parindent}{\saveindent}}

\newenvironment{proofof}[1]%
      {\bigpar{\bf#1:}\ %
             \setlength{\saveindent}{\parindent} 
                       \ignorespaces}%
      {\stopproof\ignorespaces\bigbreak \setlength{\parindent}{\saveindent}}

\newenvironment{remark}%
      {\smallpar{\bf Remark:}\ 
                       \ignorespaces}%
      {\stopproof\ignorespaces\medbreak \setlength{\parindent}{\saveindent}}

\newenvironment{remark*}%
      {\smallpar{\bf Remark:}\ 
                       \ignorespaces}%
      {\ignorespaces\medbreak \setlength{\parindent}{\saveindent}}

      {\smallpar{\bf Remarks:}\ 
                       \ignorespaces}%
      {\stopproof\ignorespaces\medbreak \setlength{\parindent}{\saveindent}}

\newenvironment{remarks*}%
      {\smallpar{\bf Remarks:}\ 
                       \ignorespaces}%
      {\ignorespaces\medbreak \setlength{\parindent}{\saveindent}}

      {\smallpar{\bf Remark:}\ %
                       \ignorespaces}%
      {\stopproof\ignorespaces\medbreak \setlength{\parindent}{\saveindent}}

      {\smallpar{\bf Remarks:}\ %
                       \ignorespaces}%
      {\stopproof\ignorespaces\medbreak \setlength{\parindent}{\saveindent}}
\makeatother

\newtheorem{theorem}{Theorem}
\newtheorem{lemma}[theorem]{Lemma}
\newtheorem{claim}[theorem]{Claim}
\newtheorem{proposition}[theorem]{Proposition}

\newtheorem{definition}[theorem]{Definition}
\newtheorem{example}{Example}

\def\begex{\begin{example}\parindent=0pt \rm}
\def\endex{\end{example}}
\def\square{\vbox{\hrule height.2pt\hbox{\vrule width.2pt height5pt \kern5pt
                                   \vrule width.2pt} \hrule height.2pt}}
\def\stopproof{\hfill \square \smallskip}

\def \lf {{\lfloor}}
\def \rf {{\rfloor}}
\def \A {{ ({t \over n} \wedge 1) }}

\def\sfrac#1#2{{\textstyle{#1 \over #2}}}
\def\floor#1{{{\lf #1 \rf}}}
\def\ceiling#1{{{\lceil #1 \rceil}}}
\def \lc {{\lceil}}

\def \rc {{\rceil}}

\def \sgeq {{\succeq}}

\def\half{{\textstyle{1\over2}}}
\def \matches{{\cal M}}
\def\quarter{{\textstyle{1\over4}}}

\def\third{{\textstyle{1\over3}}}

\def\eighth{{\textstyle{1\over8}}}

\def \r {{\bf R}}

\def \r {{ \cal F}}
\def \ms {{ m}}
\def \mss {{ m^*}}

\def\r|{{\Bigr\vert}}
\def\l|{{\Bigl\vert}}

\def \R {{\bf R}}


\def\phi {\Phi}

\def\e{\epsilon}
\def\twosubs#1#2{\scriptstyle#1\atop\scriptstyle#2}

\def \pt {{\tilde p}}

\def\varepsilon{\mathchar"122 }

\def \one {{\mathbf 1}}
\def \chi {{\mathbf 1}}

\def\law{{\cal L}}

\def\p {{ \bf P}}
\def\P {{ \bf P}}

\def\e {{ \bf {E}}}

\def \id {{\rm id}}

\def \sgood {{0}}
\def \sbad {{ \infty}}
\def \wbad {{ \sbad}}
\def \wgood {{ \sgood}}

\def\Sq{{\cal S}_q}
\def\Sq-{{\cal S}_{q-1}}

\def\u{{\cal U   }}

\def \S {{\cal S}}

\def \mt {{\widehat \mu}}

\def \lt {{\widehat \lambda}}

\def \Z {\bf Z}

\def \kss {{ k^*}}

\def\f {{\cal F}}

\def\given {{\,|\,}}
\def\biggiven {{\,\Bigl|\,}}
\def\one{{\mathbf 1}}

\def\ent{{ \rm \sc ENT}}
\def \phat {{ \widehat P}}

\def\enthat{{ \rm {{\sc ENT}}}}

\def\pm{{ \pi_{(m)}}}
\def\pt{{ \pi_{(t)}}}

\def \fkp {{ {\widehat \f_{k+1}}}}

\newcommand{\be}{\begin{equation}}
\newcommand{\ee}{\end{equation}}
\newcommand{\lab}{\label}
\begin{document}
\title{Improved mixing time bounds for the \\ 
  Thorp shuffle
and
$L$-reversal chain
}
\author{
{\sc Ben Morris}\thanks{Department of Mathematics,
University of California, Davis.
Email:
{\tt morris@math.ucdavis.edu}.
Research partially supported by Sloan Fellowship and
NSF grant DMS-0707144.}
}  
\date{}
\maketitle
\begin{abstract}
\noindent 
We prove a theorem 
that reduces bounding the mixing time 
of a card shuffle to 
verifying a 
condition that involves only pairs of cards,
then  we use it to obtain improved bounds for
two previously studied models. 
 
E.~Thorp  
introduced the following card
shuffling model 
in 1973. 
Suppose the number of cards $n$ is even. 
Cut the deck into two equal piles. 
Drop the first card from the left pile or from the right pile
according 
to the outcome of a fair coin flip. Then drop from the other pile. 
Continue this way until both piles are empty. We obtain a mixing 
time bound of $O(\log^4 n)$. Previously, the best known bound 
was $O(\log^{29} n)$ and previous proofs were only valid for $n$ 
a power of $2$. 

  We also analyze the following model, 
called the  {\it $L$-reversal chain}, introduced by Durrett. 
There are $n$ cards arrayed in 
a circle. 
Each step, an interval of cards of length at most $L$
is chosen uniformly at random and its order is reversed.
Durrett has conjectured that the mixing time is 
$O(\max(n, {n^3 \over L^3})\log n)$. 
We obtain a bound that is 
within a factor $O(\log^2 n)$ of this, the first bound within 
a poly log factor of the conjecture.

\end{abstract}
\setcounter{page}{1}

\section{Introduction} \lab{intro}
Card shuffling has a rich history in mathematics, dating back to 
work of Markov \cite{markov} and Poincare
\cite{poincare}. A basic problem is to determine 
the mixing time, i.e., the number of shuffles necessary to mix up the deck
(sec Section \ref{secaps} for a precise definition).
A natural first step (used as far back
Borel and Cheron \cite{bc} in 1940)
is to determinine the number of steps necessary to 
randomize single cards and pairs. 
Clearly this is always a lower bound for the mixing time.
On the other hand, it is often not far from  an upper bound
as well; for a number of models of card shuffling
(see, e.g., Diaconis and Shahshahani \cite{ds},  
Wilson \cite{wilson}, or Bayer and Diaconis \cite{bd})
the 
the mixing time is only a small factor (e.g. $O(1)$ or $O(\log n)$)
larger than the time required to mix pairs. 
This suggests finding a general method 
that reduces bounding the mixing time (in the global sense
that the distribution on all $n!$ permutations is roughly uniform) to 
verifying a local
condition that involves only pairs of cards. 
In this paper, we introduce such a method 
and use it to analyze two previously studied models.
In both cases we find an upper bound for the mixing time that 
is within a poly logarithmic factor of optimal. 

 We study card shuffles that can be viewed as generalizations
of three card Monte. In three card Monte, the 
cards are spread out face down on a table. 
In one step, the dealer chooses two cards, puts them together and then 
separates them quickly so that an observer cannot tell which is which. 
We call this operation a {\it collision}, and 
model it mathematically as a random permutation that is an even mixture 
of a transposition and the identity. 
We prove a general theorem 
that applies to any method of shuffling that uses collisions. The theorem 
bounds the change in relative entropy after many steps of the chain,
based on something that is related to the interactions between 
pairs of cards. 
Next we use the theorem to analyze two card shuffling models,
the Thorp shuffle and Durrett's $L$-reversal
model. 

\subsection{Applications}
\label{secaps}
 In this section we describe two applications of our main 
theorem. 
First, we give a formal definition of the mixing time.
Let $p(x,y)$ 
be transition probabilities for a   Markov chain on a finite state space $V$
with a uniform stationary distribution.
For probability measures $\mu$ and $\nu$ on $V$,
define the total variation distance
$|| \mu - \nu || = \sum_{x \in V} |\mu(x) - \nu(x) |$, 
and 
define the mixing time
\be
\label{mixingtime}
T_{\rm mix} = \min \{n: || p^n(x, \, \cdot) - \u || \leq \quarter \mbox{ for all $x \in V$}\} \,,
\ee
where $\u$ denotes the uniform distribution.

 Our first application is the Thorp shuffle,
which is defined as follows. 
Assume that the number of cards, $n$, is even.
Cut the deck into two equal piles. 
Drop the first card from the left pile or the
right
pile according to the outcome of a fair coin flip;  
then drop from the other pile. 
Continue this way, 
with independent coin flips deciding whether to drop {\sc left-right} or 
{\sc right-left} each time, until 
both piles are empty. 

 The  Thorp shuffle, despite its simple description, 
has been hard
to analyze. Determining its mixing 
time 
has been called the ``longest-standing open card shuffling problem'' \cite{per}. 
In \cite{thorp} the author obtained the first 
poly log upper bound, proving a 
bound of $O(\log^{44} n)$, 
valid when $n$ is a power of $2$.  
Montenegro and Tetali \cite{mt-thorp} built on this
to get a bound of $O(\log^{29} n)$. 
In the present paper, we dispense with 
the power-of-two assumption
and get an improved bound of $O(\log^4 n)$.

We also analyze a  Markov chain that was introduced by 
Durrett \cite{durrett} as a model for evolution of a 
genome (see \cite{durrettbio}). 
In the {\it $L$-reversal chain} there are two parameters, $n$ and $L$.
The cards are located at the
vertices of an $n$-cycle, which we label
$0, \dots, n - 1$. 
Each step, a (nonempty) interval of cards of length at most $L$
is chosen uniformly at random and its order is reversed.
By the coupon collector problem, $O(n \log n)$ steps are needed 
to break adjacencies between neighboring pairs. 
Furthermore, the mixing time for a single card is on the order
${n^3 \over L^3}$, because each step 
the probability that a particular card moves is on the order of
$L/n$ and each time a card moves it performs a step of
a symmetric random walk with 
typical displacement on the order $L$. 
These considerations led Durrett to the 
following conjecture. \\
\\
{\bf Conjecture (Durrett). } The mixing time for the $L$-reversal
chain is $O(\max(n, {n^3 \over L^3})\log n)$. \\
\\
In \cite{durrett}, Durrett proves the corresponding 
lower bound using Wilson's technique
\cite{wilson}
based  
on eigenfunctions. 
The 
spectral gap was determined 
to be within constant factors of $\max(n, {n^3 \over L^3})$ by 
Cancrini, Caputo and Martinelli
\cite{ccm}. 
The best previously-known bound for the mixing time, 
which could be obtained by applying standard comparison techniques,  
was within a factor $O(n^{2/3})$ 
of the Durrett's conjecture in the worst case. 

 Durrett's conjecture
has presented a challenge to 
existing techniques. 
As shown by Martinelli et al, 
the log Sobolev constant does not give the conjectured 
mixing time. Furthermore, the mixing time in $L^2$ 
(defined by replacing total variation distance by 
an appropriate $L^2$ distance in equation (\ref{mixingtime}))
can be 
nearly $n^{1/3}$ times the 
conjecture, as the following example shows. 
Let $L = n^{2/3}$, so that the conjectured mixing time is $O(n \log n)$. 
We claim that in this case the $L^2$ mixing time is at least $c n^{4/3}$ for a 
constant $c$. 
Let
$A$ be the event that cards $1, \dots, n/2$ occupy positions 
$1, \dots n/2$ in any order.
If the initial ordering is the identity permutation, 
then after $t$ shuffles we have
\begin{eqnarray*}
\P(A) &\geq& \P(\mbox{none of the reversed intervals contained cards $1$ or $n/2$}) \\
      &\geq& \Bigl(1 - {2L \over n} \Bigr)^t,
\end{eqnarray*}
which is much larger than ${n \choose n/2}^{-1}$ unless  $t \geq c n^{4/3}$ for a constant $c$. 
Since mixing in $L^2$ implies 
convergence of transition probabilities,
the $L^2$ mixing time is at least 
on the order of $n^{4/3}$, which is
higher than the conjecture. This means that in order to 
prove the conjectured bound on the mixing time in total variation, one cannot use any method  
for bounding mixing times that gives a bound in $L^2$.

In the present paper, 
we prove that the mixing time is  $O\Bigl( 
(n \vee {n^3 \over L^3})\log^3n  \Bigr)$. This is 
the first upper bound 
that is within a poly log factor 
of the conjecture.

 The remainder of this paper is organized as follows. In Section 
\ref{bk} we give some necessary background on entropy and prove some
elementary inequalities. In Section \ref{gen} 
we define {\it Monte shuffles}, the general model of card shuffling
to which our main theorem will apply. In Section \ref{secmain} we prove the main theorem.
In Section \ref{secthorp} we analyze the Thorp shuffle
and 
in Section \ref{seclrev} we analyze the $L$-reversal chain.
\section{Background} 
\label{bk}
 For a
probability distribution $\{p_i: i \in V\}$, define the 
(relative) entropy of $p$ by 
$\ent(p) = \sum_{i \in V} p_i \log (|V| p_i)$,
where we define $0 \log 0 = 0$.
The following well-known inequality links relative entropy to total variation distance.
Let $\u$ denote the uniform distribution over $V$. Then
\begin{equation}
\label{totent}
|| p - \u || \leq \sqrt{ \half \ent(p)}.
\end{equation}
If $X$
is a random variable (or random permutation) 
taking finitely many values, 
define $\ent(X)$ as the relative entropy of the
distribution of $X$.
Note that if $\P(X = i) = p_i$  for $i \in V$ then
$\ent(X) = \e(\log (|V| p_X))$. 
We
shall think
of the distribution 
of a random permutation
in $\S_n$ as a sequence of probabilities of length $n!$, 
indexed by permutations in $\S_n$.
If $\f$ is a sigma-field, then we shall write 
$\ent(X \given \f)$ for the relative entropy of the conditional
 distribution of $X$ given $\f$. 
Note that $\ent(X \given \f)$ is a random variable.
If $\pi$ is a random permutation
in $S_n$, 
then for $1 \leq k \leq n$,  
define
$\f_k = \sigma( \pi^{-1}(k), \dots, \pi^{-1}(n))$, 
and 
define $\ent(\pi, k) = \ent( \pi^{-1}(k) \given \f_{k+1})$
(where we think of the 
conditional distribution 
of $\pi^{-1}(k)$ given $\f_{k+1}$ 
as being a sequence of length
$k$). 
The standard entropy chain rule (see, e.g., \cite{cover})
gives the following proposition.
\begin{proposition}
\label{decomp}
For any $i \leq n$ we have
\[
\ent(\pi) = \e \Bigl(\ent( \pi \given \f_{i})\Bigr) 
+ \sum_{k=i}^n \e(\ent(\pi, k)).
\]
\end{proposition}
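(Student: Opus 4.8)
The plan is to reduce the statement to a one-step identity and then telescope. Observe that $\f_i \supseteq \f_{i+1} \supseteq \cdots \supseteq \f_{n+1}$, where $\f_{n+1}$ is the trivial $\sigma$-field, so $\ent(\pi \given \f_{n+1}) = \ent(\pi)$. Hence it suffices to prove, for each $k$ with $i \le k \le n$,
\[
\e\bigl(\ent(\pi \given \f_{k+1})\bigr) = \e\bigl(\ent(\pi \given \f_k)\bigr) + \e(\ent(\pi, k)),
\]
because summing this over $k = i, \dots, n$ collapses the left-hand side to $\e(\ent(\pi \given \f_{n+1})) - \e(\ent(\pi \given \f_i)) = \ent(\pi) - \e(\ent(\pi \given \f_i))$, which is exactly the claimed rearrangement.

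To prove the one-step identity I would first pin down the normalizing constants implicit in each conditional relative entropy. Conditioning on $\f_{k+1}$ fixes the positions $\pi^{-1}(k+1),\dots,\pi^{-1}(n)$ of cards $k+1,\dots,n$, leaving exactly $k$ free positions; thus the conditional law of $\pi$ is supported on a set of size $k!$ and $\ent(\pi \given \f_{k+1}) = \log(k!) - H(\pi \given \f_{k+1})$, where $H(\,\cdot \given \f_{k+1})$ denotes the pointwise Shannon entropy of the conditional law. Similarly $\ent(\pi \given \f_k) = \log((k-1)!) - H(\pi \given \f_k)$, and since the conditional law of $\pi^{-1}(k)$ given $\f_{k+1}$ is supported on those $k$ free positions, $\ent(\pi, k) = \ent(\pi^{-1}(k) \given \f_{k+1}) = \log k - H(\pi^{-1}(k) \given \f_{k+1})$. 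Because $\f_k = \f_{k+1} \vee \sigma(\pi^{-1}(k))$, the standard chain rule for conditional Shannon entropy (\cite{cover}) gives $\e\, H(\pi \given \f_{k+1}) = \e\, H(\pi^{-1}(k) \given \f_{k+1}) + \e\, H(\pi \given \f_k)$. Taking expectations in the three displayed identities and using $\log(k!) = \log k + \log((k-1)!)$ then yields the one-step identity.

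An equivalent route avoids naming Shannon entropy: the conditional law of $\pi$ given $\f_{k+1}$ factors as the conditional law of $\pi^{-1}(k)$ given $\f_{k+1}$ times the conditional law of $\pi$ given $\f_k$, so
\[
\log\!\bigl(k!\,\P(\pi = \sigma \given \f_{k+1})\bigr) = \log\!\bigl(k\,\P(\pi^{-1}(k) = \sigma^{-1}(k)\given \f_{k+1})\bigr) + \log\!\bigl((k-1)!\,\P(\pi = \sigma \given \f_k)\bigr);
\]
averaging over $\sigma$ under the conditional law given $\f_{k+1}$ and then taking $\e$ gives the one-step identity directly.

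I do not expect a genuine obstacle here: the content is just the entropy chain rule together with the bookkeeping forced by $\ent$ being relative entropy against the uniform distribution, so the only point requiring care is verifying that the normalizers are $k!$, $(k-1)!$, and $k$ — which is precisely the observation that conditioning on $\f_{k+1}$ leaves exactly $k$ unoccupied positions. The telescoping and the triviality of $\f_{n+1}$ then finish the proof.
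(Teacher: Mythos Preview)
Your proof is correct and follows exactly the approach the paper indicates: the paper does not give a detailed argument but simply states that the proposition follows from the standard entropy chain rule (citing \cite{cover}). Your write-up supplies the details the paper omits, namely the telescoping over $k$ and the verification that the normalizing constants $k!$, $(k-1)!$, and $k$ match up via $\log(k!) = \log k + \log((k-1)!)$, so that the Shannon chain rule $\e\,H(\pi\mid\f_{k+1}) = \e\,H(\pi^{-1}(k)\mid\f_{k+1}) + \e\,H(\pi\mid\f_k)$ translates directly to relative entropies.
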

To compute the relative entropy in first term on the right hand side,
we think of the distribution of $\pi$ given $\f_i$ as a sequence of
probabilities of length $(i-1)!$. 
\begin{remark}
Substituting $i = 1$ into the formula 
gives $\ent(\pi) = \sum_{k=1}^n \e(\ent(\pi,k))$.
\end{remark} 
If we think of $\pi$ as representing the order of a deck of cards,
with $\pi(i) = \mbox{location of card $i$}$, then this allows us to 
think of $\e(\ent(\pi, k))$ as the portion of the overall entropy
$\ent(\pi)$
that is attributable to the location $k$. 
If $S \subset \{1, \dots, n\}$ is a set of positions then 
we shall refer to the quantity $\sum_{k \in S} \ent(\pi, k)$
as the entropy that is {\it attributable to $S$}.

\begin{definition}
For $p, q \geq 0$, define $d(p, q) = \half p\log p + \half q \log q - 
{p + q \over 2} \log\Bigl({p + q \over 2} \Bigr)$.
\end{definition}
We will need the following proposition.
\begin{proposition}
\label{conv}
Fix $p \geq 0$. 
The function $d(p,\,\cdot)$ is convex.
\end{proposition}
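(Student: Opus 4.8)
The plan is to strip off the part of $d(p,q)$ that does not depend on $q$ and then differentiate twice. Fixing $p\geq0$, the summand $\half p\log p$ is a constant, so it suffices to show that
\[
F(q) \;:=\; \half\, q\log q \;-\; \frac{p+q}{2}\,\log\!\Bigl(\frac{p+q}{2}\Bigr)
\]
is convex on $[0,\infty)$. It is convenient to write $\psi(x)=x\log x$ (with $\psi(0)=0$), so that $F(q)=\half\psi(q)-\psi\!\bigl(\tfrac{p+q}{2}\bigr)$.

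Next I would compute $F''$ on the open half-line $(0,\infty)$, where both terms are smooth. Since $\psi''(x)=1/x$, the chain rule gives $\frac{d^2}{dq^2}\,\psi\!\bigl(\tfrac{p+q}{2}\bigr)=\tfrac14\,\psi''\!\bigl(\tfrac{p+q}{2}\bigr)=\frac{1}{2(p+q)}$, so that
\[
F''(q)\;=\;\frac{1}{2q}\;-\;\frac{1}{2(p+q)}.
\]
Because $p\geq0$ we have $p+q\geq q>0$, hence $F''(q)\geq0$ for every $q>0$, and therefore $F$ is convex on $(0,\infty)$.

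Finally, to upgrade this to convexity on the closed interval $[0,\infty)$ I would note that $F$ is continuous at $q=0$: the term $\half q\log q$ tends to $0$ as $q\to0^+$, and $\psi\!\bigl(\tfrac{p+q}{2}\bigr)$ is continuous there. A function that is convex on an open interval and continuous up to an endpoint is convex on the closed interval, which completes the argument.

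I do not expect any serious obstacle here; the only point worth flagging is that one should not try to prove the claim by exhibiting $F$ as a sum of convex functions, since the second summand $-\psi\!\bigl(\tfrac{p+q}{2}\bigr)$ is actually \emph{concave} in $q$. The proof must use the exact cancellation $F''(q)=\tfrac1{2q}-\tfrac1{2(p+q)}$ produced by $\psi''(x)=1/x$. The behaviour at $q=0$, and the degenerate case $p=0$ (where $F$ is merely affine), are routine.
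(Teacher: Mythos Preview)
Your argument is correct and is exactly the approach the paper takes: the paper's proof is the single sentence ``A calculation shows that the second derivative is positive,'' and you have simply carried out that calculation in detail, obtaining $F''(q)=\tfrac{1}{2q}-\tfrac{1}{2(p+q)}\ge 0$.
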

\begin{proof}
A calculation shows that the second derivative is positive.
\end{proof}
Observe that $d(p,q) \geq 0$, with equality iff $p=q$ by the 
strict convexity
of the function $x \to x \log x$. Furthermore, some calculations 
give
\be
\lab{eq-df}
d(p, q) = {p + q \over 2} f\Bigl( {p-q \over p+q} \Bigr),
\ee
where $f(\Delta) = 
\half (1+ \Delta) \log(1 + \Delta) +
\half (1 - \Delta) \log(1 - \Delta)$. 
If $p = \{p_i: i \in V\}$ and $q = \{q_i: i \in V\}$ 
are both probability distributions on $V$, then we can define the 
``distance'' $d(p,q)$ between $p$ and $q$, by $d(p,q) = \sum_{i \in V}
d(p_i, q_i)$. 
(We use the term {\it distance} loosely and don't claim that $d(\cdot,\,
\cdot)$ satisfies the triangle inequality.)
Note that $d(p,q)$ is the difference between the average
of the entropies of $p$ and $q$ and the entropy of the average
(i.e. an even mixture) of $p$ and $q$. 

We will use the following projection lemma.
\begin{lemma}
\label{projection}
Let $X$ and $Y$ be random variables with distributions $p$ and $q$,
respectively. Fix a function $g$ and let $P$ and $Q$ be the distributions of $g(X)$ and
$g(Y)$, respectively. Then $d(p, q) \geq d(P,Q)$. 
\end{lemma}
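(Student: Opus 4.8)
The plan is to reduce the statement to the scalar case by grouping. Write $d(p,q) = \sum_{i \in V} d(p_i, q_i)$. For a value $v$ in the range of $g$, let $g^{-1}(v) = \{i \in V : g(i) = v\}$, so these sets partition $V$. Then $P_v = \sum_{i \in g^{-1}(v)} p_i$ and $Q_v = \sum_{i \in g^{-1}(v)} q_i$. Hence it suffices to show, for each fixed $v$, the ``superadditivity on a block'' inequality
\[
\sum_{i \in g^{-1}(v)} d(p_i, q_i) \;\geq\; d\Bigl(\sum_{i \in g^{-1}(v)} p_i,\ \sum_{i \in g^{-1}(v)} q_i\Bigr),
\]
and then sum over $v$. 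So the whole lemma follows from the two-variable claim: for nonnegative reals, $d(p_1,q_1) + d(p_2,q_2) \geq d(p_1+p_2,\, q_1+q_2)$; the general block case then follows by induction on $|g^{-1}(v)|$.

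To prove this two-variable claim, I would use the representation \eqref{eq-df}, namely $d(p,q) = \frac{p+q}{2}\, f\bigl(\frac{p-q}{p+q}\bigr)$ with $f(\Delta) = \half(1+\Delta)\log(1+\Delta) + \half(1-\Delta)\log(1-\Delta)$, together with the fact that $f$ is convex on $[-1,1]$ (this is immediate from $f''(\Delta) = \frac{1}{1-\Delta^2} > 0$, and it is also essentially Proposition \ref{conv} rephrased). Set $a_1 = \frac{p_1+q_1}{2}$, $a_2 = \frac{p_2+q_2}{2}$, $\Delta_1 = \frac{p_1-q_1}{p_1+q_1}$, $\Delta_2 = \frac{p_2-q_2}{p_2+q_2}$, and $\Delta = \frac{(p_1+p_2)-(q_1+q_2)}{(p_1+p_2)+(q_1+q_2)}$. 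A direct computation shows $\Delta = \frac{a_1 \Delta_1 + a_2 \Delta_2}{a_1 + a_2}$, i.e. $\Delta$ is the weighted average of $\Delta_1,\Delta_2$ with weights proportional to $a_1,a_2$. Then
\[
d(p_1+p_2, q_1+q_2) = (a_1+a_2)\, f(\Delta) = (a_1+a_2)\, f\!\Bigl(\tfrac{a_1}{a_1+a_2}\Delta_1 + \tfrac{a_2}{a_1+a_2}\Delta_2\Bigr) \leq a_1 f(\Delta_1) + a_2 f(\Delta_2),
\]
the inequality being exactly convexity of $f$; and the right-hand side is $d(p_1,q_1) + d(p_2,q_2)$. (Degenerate cases where some $p_i + q_i = 0$ are handled trivially, since then $d(p_i,q_i) = 0$ and that term may be dropped.)

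The main obstacle is purely bookkeeping rather than conceptual: one must verify the identity $\Delta = (a_1\Delta_1 + a_2\Delta_2)/(a_1+a_2)$ and handle the boundary behavior of $f$ at $\Delta = \pm 1$ (where $f(\pm 1) = \log 2$ is finite, so no real difficulty arises, but one should note continuity so the convexity inequality extends to the closed interval). Everything else is the partition-and-sum argument above, which requires no estimates. An alternative, even shorter route is to invoke the data-processing inequality for a suitable $f$-divergence — $d(p,q)$ is, up to the normalization, the Jensen–Shannon-type divergence associated with the convex function $x \mapsto x\log x$ — but I would prefer the elementary self-contained argument since all the needed ingredients (\eqref{eq-df} and the convexity in Proposition \ref{conv}) are already in hand.
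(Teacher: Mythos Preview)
Your proof is correct and follows essentially the same route as the paper: partition $V$ by the fibers of $g$, use the representation \eqref{eq-df}, and exploit convexity of $f$ to show superadditivity on each block. The only cosmetic difference is that the paper applies Jensen's inequality directly to the full weighted average over a block (with weights $\frac{p_x+q_x}{P_i+Q_i}$), whereas you reduce to the two-term case and induct; the underlying idea is identical.
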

\begin{proof}
Let $S_i = \{x: g(x) = i\}$. Then
\[
P_i = \sum_{x \in S_i} p_x; 
\hspace{.4 in}
Q_i = \sum_{x \in S_i} q_x.
\]
We have
\begin{eqnarray}
d(p, q) 
&=& \sum_i \sum_{x \in S_i}
d(p_x, q_x) \\
&=& \sum_i \sum_{x \in S_i}
{p_x + q_x \over 2} f\Bigl( {p_x-q_x \over p_x+q_x} \Bigr) \\
\label{jen}
&=& \sum_i 
\Bigl[ {P_i + Q_i \over 2}\Bigr]
\sum_{x \in S_i}
{p_x + q_x \over 2} 
\Bigl[ {P_i + Q_i \over 2}\Bigr]^{-1}
f\Bigl( {p_x-q_x \over p_x+q_x} \Bigr).
\end{eqnarray}
Note that $f$ has a positive second derivative, hence is 
is convex. Thus by Jensen's inequality, the quantity (\ref{jen}) 
is at least
\begin{eqnarray}
\sum_i 
\Bigl[ { P_i + Q_i \over 2}
\Bigr]
f\Bigl( \sum_{x \in S_i}
{p_x + q_x \over 2} 
\Bigl[ {P_i + Q_i \over 2}\Bigr]^{-1}
{p_x-q_x \over p_x+q_x} \Bigr) &=&
\sum_i 
\Bigl[ { P_i + Q_i \over 2}
\Bigr]
f\Bigl( 
{P_i - Q_i \over P_i + Q_i} \Bigr) \\
&=& \sum_i d(P_i, Q_i) \\
&=& d(P, Q).
\end{eqnarray}
\end{proof}
Let $\u$ denote the uniform distribution on $V$. Note that if $\mu$ is 
an arbitrary distribution on $V$, then $\ent(\mu)$ and 
$d(\mu, \u)$ are both notions of a distance from $\mu$ to $\u$.
The following lemma relates the two.
\begin{lemma}
\label{dent}
For any distribution $\mu$ on $V$ we have
\[
d(\mu, \u) \geq {c \over \log |V|} \ent(\mu),
\]
for a universal constant $c > 0$.
\end{lemma}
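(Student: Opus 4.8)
The plan is to reduce the claim to a pointwise comparison between the two summands, $d(\mu_i, 1/|V|)$ and $\mu_i \log(|V| \mu_i)$, after writing both sides in terms of the ratio $r_i = |V|\mu_i$. Set $u = 1/|V|$. By \eqref{eq-df} we have $d(\mu_i, u) = \tfrac{\mu_i + u}{2} f\bigl( \tfrac{\mu_i - u}{\mu_i + u}\bigr)$, and the $i$-th contribution to $\ent(\mu)$ is $\mu_i \log r_i = u\, r_i \log r_i$. Pulling out the common factor $u$, it suffices to show the scalar inequality
\[
\tfrac{r+1}{2}\, f\Bigl( \tfrac{r-1}{r+1}\Bigr) \;\geq\; \frac{c}{\log|V|}\, r \log r \qquad \text{for all } r \geq 0,
\]
and then sum over $i$. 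Since $\sum_i \mu_i = 1$ forces $r_i \le |V|$ for every $i$, the ratio on the right is controlled: $r\log r \le r \log |V|$ whenever $r \le |V|$, so the claimed bound would follow from a \emph{universal} (dimension-free) inequality $\tfrac{r+1}{2} f(\tfrac{r-1}{r+1}) \ge c\, r$... but that is false for large $r$ (the left side grows like $\tfrac12 r\log r$, not linearly). So the correct route is to keep the $\log|V|$ and prove directly, for $0 \le r \le |V|$,
\[
g(r) := \tfrac{r+1}{2}\, f\Bigl( \tfrac{r-1}{r+1}\Bigr) \;\geq\; \frac{c}{\log|V|}\, r \log r,
\]
which is what I will do.

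The key steps, in order. First, I would record the exact formula $g(r) = \tfrac12 r \log r - \tfrac{r+1}{2}\log\tfrac{r+1}{2}$, obtained by expanding $\tfrac{r+1}{2} f(\Delta)$ with $\Delta = \tfrac{r-1}{r+1}$ (so $1+\Delta = \tfrac{2r}{r+1}$, $1-\Delta = \tfrac{2}{r+1}$); equivalently $g(r) = d(r,1)$ in the scalar sense, the difference between the average entropy of the point masses $r$ and $1$ and the entropy of their average. Second, I would split into two regimes. For $r$ bounded (say $r \le e^2$), both $g(r)$ and $r\log r$ (set to $0$ at $r \le 1$) are continuous, $g$ vanishes only at $r = 1$ to second order and so does $r \log r - (r-1)$-type behaviour; a direct comparison — or a Taylor expansion of $g$ near $r=1$, where $g(r) \sim \tfrac{(r-1)^2}{4}$ — gives $g(r) \ge c_1 (r-1)^2 \ge c_1' \, r\log r$ on a neighborhood of $1$, and compactness handles the rest of the bounded range (the ratio $g(r)/(r\log r)$ extends continuously and is strictly positive), yielding $g(r) \ge c_2\, r\log r \ge \tfrac{c_2}{\log|V|} r\log r$ trivially since $|V|\ge 2$. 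Third, the main regime $r \ge e^2$: here I claim $g(r) \ge \tfrac14 r\log r$ outright. Indeed $g(r) = \tfrac12 r\log r - \tfrac{r+1}{2}\log\tfrac{r+1}{2} \ge \tfrac12 r\log r - \tfrac{r+1}{2}\log r \ge \tfrac12 r\log r - \tfrac{3}{4} r \cdot \tfrac{\log r}{?}$ — I need to be a little careful, so more precisely $\tfrac{r+1}{2}\log\tfrac{r+1}{2} \le \tfrac{r+1}{2}\log r \le \tfrac{2r}{3}\log r$ is not quite enough; instead use $\log\tfrac{r+1}{2} = \log r + \log\tfrac{r+1}{2r} \le \log r - \log\tfrac{3}{2}$ eventually, but the clean statement is that $g(r)/(r\log r) \to \tfrac12$ as $r\to\infty$ and is increasing for large $r$, so it exceeds $\tfrac14$ for $r \ge e^2$ (after checking the threshold). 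Since $r \le |V|$ we then have $g(r) \ge \tfrac14 r\log r \ge \tfrac14 \cdot \tfrac{\log r}{\log|V|}\, r\log r$... no: I want the factor $\tfrac{1}{\log|V|}$ to appear, and here it appears for free because $g(r) \ge \tfrac14 r \log r \ge \tfrac14 \cdot \tfrac{1}{\log|V|} \cdot r\log r \cdot \log|V| \cdot \tfrac{1}{\log|V|}$ — the point is simply that $\tfrac14 r \log r \ge \tfrac{c}{\log|V|} r\log r$ whenever $\log|V| \ge 4c$, which holds for $c$ small. Finally, summing $g(r_i)$ over $i$ (recalling $d(\mu,u) = u\sum_i g(r_i)$ and $\ent(\mu) = u\sum_i r_i\log r_i$) gives the lemma.

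The main obstacle is the bounded-$r$ regime near $r = 1$: there $g(r)$ and $r\log r$ both degenerate (to quadratic order and to $r-1$ plus quadratic order respectively, hence their \emph{ratio} tends to a finite nonzero limit), and one must verify that the ratio $g(r)/(r\log r)$ really is bounded below by a positive absolute constant on the whole bounded window, not just near $1$. I would handle this by the continuity/compactness argument sketched above — defining the ratio to be its limiting value $1/4$ at $r=1$ and checking it never vanishes on $(0,\infty)$ — rather than by an explicit constant chase. The large-$r$ part is routine calculus once the threshold $e^2$ (or any fixed constant) is pinned down, and the passage from the scalar inequality to the stated lemma is immediate by summation. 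Throughout, the only facts used are \eqref{eq-df}, the definition of $d(\cdot,\cdot)$ on distributions, the definition of $\ent$, the constraint $\sum_i \mu_i = 1$ (equivalently $r_i \le |V|$), and $|V| \ge 2$.
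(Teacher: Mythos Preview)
Your pointwise scalar inequality fails for $r$ just above $1$, and this is a genuine gap in the argument. Near $r=1$ the left side $g(r)=d(r,1)$ satisfies $g(r)\approx (r-1)^2/8$ (as you correctly note), but $r\log r \approx (r-1)$ is only linear, so the right side is $\approx \frac{c}{\log|V|}(r-1)$ and dominates the left whenever $0 < r-1 < 8c/\log|V|$. Thus the ratio $g(r)/(r\log r)$ tends to $0$ at $r=1^+$, not to $1/4$ as you claim. You also misidentify the large-$r$ asymptotics: in your own formula $g(r) = \half r\log r - \frac{r+1}{2}\log\frac{r+1}{2}$ the two leading $r\log r$ terms cancel, leaving $g(r) \sim \half(\log 2)\,r$, which is linear; hence $g(r)/(r\log r)\to 0$ as $r\to\infty$ (not $\half$), and the assertion $g(r)\ge\frac14 r\log r$ is false for large $r$. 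The large-$r$ regime is still salvageable on the restricted range $r\le|V|$ (since $g(r)\ge c'r$ for $r\ge 2$ while $r\log r\le r\log|V|$), but the failure near $r=1^+$ cannot be repaired without changing the right-hand side of the pointwise comparison.

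The missing idea, and this is exactly what the paper supplies, is to use the constraint $\sum_i r_i = |V|$ to rewrite $\ent(\mu) = |V|^{-1}\sum_i h(r_i)$ with $h(r) := r\log r - (r-1)$, since $\sum_i(r_i-1)=0$. The subtraction makes $h$ nonnegative and \emph{quadratic} at $r=1$ (indeed $h(r)\approx\half(r-1)^2$), so now the ratio $g(r)/h(r)$ really does tend to a positive limit there --- in fact exactly the $1/4$ you quoted, which suggests you had $h$ in mind but wrote $r\log r$. With $h$ in place of $r\log r$ your two-regime outline goes through: the ratio $h(r)/g(r)$ is bounded on $[0,2]$ by continuity (L'H\^opital at $r=1$), and for $r\in[2,|V|]$ the linear lower bound $g(r)\ge c'r$ together with $h(r)\le r\log r\le r\log|V|$ gives $g(r)\ge\frac{c'}{\log|V|}h(r)$.
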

\begin{proof}
Let $n = |V|$, 
define $\mt = n\mu$ and 
define $g:(0, \infty) \to \R$ by $g(x) = x \log x - (x - 1).$ Then
\begin{eqnarray}
\ent(\mu) &=& \sum_{i \in V} \mu(i) \log(n \mu(i)) \\
&=& {1 \over n} \sum_{i \in V} \mt(i) \log \mt(i) - (\mt(i) - 1) \\
&=& {1 \over n} \sum_{i \in V} g(\mt(i)),
\end{eqnarray}
where the second equality holds because $\sum_{i \in V} (\mt(i) - 1) = 0$.
Thus it's enough to show for a universal constant $c$ we have
\begin{equation}
\label{showd}
d(\mu(i), \sfrac{1}{n}) \geq {c \over n \log n} g( \mt(i)),
\end{equation}
for all $i \in V$. Fix $i \in V$ and let $x = \mt(i)$. Then
by equation (\ref{eq-df}) we have
\begin{eqnarray}
d( \mu(i), \sfrac{1}{n}) &=& {1 \over n} d(x, 1) \\
&=& {1 \over n} \Bigl( { x  + 1 \over 2} \Bigr) f\Bigl( {x -1 \over x+1} \Bigr),
\end{eqnarray}
where $f(\Delta) = 
\half (1+ \Delta) \log(1 + \Delta) +
\half (1 - \Delta) \log(1 - \Delta)$. 
Thus it remains to show that the function $R(x)$ defined by
\be
\lab{ratio}
R(x) = 
{g(x) \over \Bigl( { x  + 1 \over 2} \Bigr) f\Bigl( {x -1 \over x+1} \Bigr)}
\ee
is at most $c^{-1} \log n$ on the interval $[0,n]$, for a constant $c>0$.
Note that $R(x)$ is bounded on the interval $[0,2]$.
(This can be seen by applying L'Hopital's rule twice for the point $x = 1$.) Let $x \in [2,n]$.
The denominator in (\ref{ratio}) is at least 
\[
{x \over 2} f\Bigl( {x - 1 \over x+1} \Bigr) \geq {x \over 2} f(\third),
\]
since the function $x \to f(x-1/x+1)$ is increasing
on $[2, \infty)$. The  
numerator is $g(x) \leq x \log x \leq x \log n$.
Thus $R(x) \leq {2\log n/f(\third)}$ on the interval $[2,n]$
and the proof is complete.
\end{proof}
\section{General set-up: card shuffles with collisions}
\label{gen}
\subsection{Collisions}
\label{mset}
We shall now define a {\it collision}, 
which is the basic ingredient in all of the card
shuffles analyzed in the present paper. 
If $\pi$ is a random permutation in $S_n$ such that
\[
\pi= 
\left\{\begin{array}{ll}
\id & 
\mbox{with probability $\half$;}\\
(a,b) & \mbox{with probability $\half$,}\\
\end{array}
\right.
\]
for some $a,b \in \{1, 2, \dots, n\}$ (where
we write $\id$ for  the identity permutation and 
$(a,b)$ for the transposition of $a$ and $b$),
then we will call $\pi$ a {\it
collision.} If $\pi$ and $\mu$ are permutations in $S_n$, then 
we write $\pi \mu$ for the composition $\mu \circ \pi$. 

A card shuffle can be described as 
a
random permutation chosen 
from 
a certain 
probability distribution.
If we start with 
the identity permutation and
each shuffle has the distribution of
$\pi$, then after $t$ steps the cards are distributed
like
$\pi_1 \cdots \pi_t$,
where the $\pi_i$ are i.i.d.~copies of $\pi$. 
In this paper, we shall consider shuffling permutations $\pi$ that can be
written in the form
\be
\lab{form}
\pi = \nu c(a_1, b_1) c(a_2, b_2) \cdots c(a_k, b_k),
\ee
where $\nu$ is an arbitrary random permutation, 
the numbers $a_1, \dots, a_k, b_1, \dots, b_k$ are disjoint, and 
$c(a_j, b_j)$ is a collision of $a_j$ and $b_j$.
The values of $a_j$ and $b_j$ and the number of collisions (which can
be zero) may depend on $\nu$, 
but conditional on $\nu$ the $c(a_j, b_j)$ are independent 
collisions.  We shall call shuffles of this type {\it Monte}. 

For $t \geq 1$, define $\pi_{(t)} = \pi_1 \cdots \pi_t$.
\subsection{Warm-Up Lemma}
\label{abuse}
In this section we prove a simple lemma with a short proof that brings
out many of the central ideas of our main theorem (Theorem
\ref{maintheorem} below). 
We start with an easy proposition.
\begin{proposition}
\label{det}
Suppose that $\pi$ is any fixed permutation. Then
\[
\ent(\mu \pi) = \ent(\mu).
\]
\end{proposition}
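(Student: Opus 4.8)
The plan is to show that right-multiplication by a fixed permutation $\pi$ merely relabels the state space, so it preserves relative entropy. Concretely, if $\mu$ is the distribution of a random permutation $X$, then $\mu\pi$ is the distribution of $X\pi$, and the map $\sigma \mapsto \sigma\pi$ is a bijection of $S_n$ onto itself. Since $\ent$ depends only on the multiset of probability values assigned to the points of the finite state space (and on its cardinality $|V| = n!$), and a bijection permutes these values without changing the multiset, the entropy is unchanged.

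First I would recall that $\ent(\mu) = \sum_{\sigma \in S_n} \mu(\sigma)\log(n!\,\mu(\sigma))$. Then I would observe that, writing $\rho = \mu\pi$ for the pushforward of $\mu$ under $\sigma \mapsto \sigma\pi$, we have $\rho(\sigma\pi) = \mu(\sigma)$ for every $\sigma$, equivalently $\rho(\tau) = \mu(\tau\pi^{-1})$ for every $\tau \in S_n$. Substituting $\tau = \sigma\pi$ into the defining sum for $\ent(\rho)$ and using that $\sigma \mapsto \sigma\pi$ ranges over all of $S_n$ exactly once,
\[
\ent(\mu\pi) = \sum_{\tau \in S_n} \rho(\tau)\log(n!\,\rho(\tau)) = \sum_{\sigma \in S_n} \mu(\sigma)\log(n!\,\mu(\sigma)) = \ent(\mu),
\]
which is the claim.

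There is essentially no obstacle here; the only thing to be careful about is the bookkeeping of the composition convention. The paper writes $\pi\mu$ for $\mu\circ\pi$, so that $\mu\pi$ means $\pi\circ\mu$ as functions; one should check that under whichever convention is in force, post-composing (or pre-composing) the random permutation by the fixed $\pi$ still induces a bijection on $S_n$ — which it does, since both left and right translation by a fixed group element are bijections of a group. Thus the identity $\ent(\mu\pi) = \ent(\mu)$ follows from invariance of the entropy functional under relabeling of the index set by a bijection, and the proof is a one-line change of variables.
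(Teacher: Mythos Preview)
Your argument is correct and is exactly the paper's approach, just written out in more detail: the paper's one-line proof says that ``up to a re-labeling of indices, the random permutation $\mu\pi$ has the same distribution as $\mu$, hence the same relative entropy,'' and your change-of-variables computation makes this precise.
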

\begin{proof}
Up to a re-labeling of indices, the random permutation $\mu \pi$ 
has the same distribution as $\mu$, hence the same relative entropy.
\end{proof}
If $\pi$ is random and independent of $\mu$ then 
$\ent(\mu \pi) \leq \ent(\mu)$, which follows by conditioning on
$\pi$,
applying Proposition \ref{det}, 
and then applying Jensen's inequality to the 
function $x \to x \log x$. It follows that if $\pi_1,
\pi_2, \dots$ are i.i.d.~copies of $\pi$ then $\ent(\pi_1 \cdots
\pi_k)$ 
is nonincreasing in $k$. In this section we study the decay of entropy 
$\ent(\mu \pi) - \ent(\mu)$ in the case where the permutation $\pi$ is
a collision.  

The following lemma relates to the case where $\pi$ is a 
collision 
between the $j$th
card 
and another card of smaller index. The lemma says that 
the relative entropy is reduced by at least $c \ent(\mu, j)/\log n$, on 
average (where ``on average'' means with respect to the different
possible choices of indices $i\leq j$). 
\begin{lemma}
\label{mainlemma}
Let $\mu$ be a random permutation.
Then for a universal constant $c$ we have
\[
j^{-1} \sum_{i \leq j} \ent(\mu c(i, j))
\leq \ent(\mu) - c \ent(\mu, j)/\log n.
\]
\end{lemma}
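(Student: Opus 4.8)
The plan is to reduce everything to a statement conditional on the $\sigma$-field $\f_{j+1}$, using the key fact that a collision $c(i,j)$ with $i\le j$ rearranges only the positions $\{1,\dots,j\}$, leaving positions $j+1,\dots,n$ --- and hence $\f_{j+1}$ --- untouched. Conditionally on a value $\omega$ of $\f_{j+1}$, the top $n-j$ positions are frozen, so $\mu$ and $\mu(i,j)$ are both random arrangements of the $j$ remaining cards among the $j$ remaining positions; write $V_\omega$ for that set of positions, $|V_\omega|=j$. Since $c(i,j)$ is an even mixture of $\id$ and the transposition $(i,j)$, the conditional law of $\mu c(i,j)$ is an even mixture of the conditional laws of $\mu$ and of $\mu(i,j)$; and by Proposition~\ref{det} applied conditionally --- $\mu(i,j)$ is just $\mu$ with the cards in positions $i$ and $j$ interchanged, a relabeling --- these two laws have equal relative entropy. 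Unwinding the definition of $d$ (the entropy of an even mixture equals the average of the two entropies minus $d$ between them) gives the identity, between $\f_{j+1}$-measurable random variables,
\[
\ent(\mu\given\f_{j+1})-\ent\big(\mu c(i,j)\given\f_{j+1}\big)\ =\ d\big(\law(\mu\given\f_{j+1}),\ \law(\mu(i,j)\given\f_{j+1})\big).
\]

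Next I would apply the projection lemma, Lemma~\ref{projection}, with the map $\rho\mapsto\rho^{-1}(j)$ recording the card at position $j$. Its pushforward under $\law(\mu\given\f_{j+1})$ is $P:=\law(\mu^{-1}(j)\given\f_{j+1})$, and under $\law(\mu(i,j)\given\f_{j+1})$ it is $P_i:=\law(\mu^{-1}(i)\given\f_{j+1})$, since $(i,j)$ sends the card at position $i$ to position $j$; both distributions are supported on $V_\omega$. Hence the right-hand side above is $\ge d(P,P_i)$. Averaging over $i\le j$ and using that $d(P,\cdot)$ is convex (Proposition~\ref{conv}, summed over coordinates) together with Jensen's inequality,
\[
\frac1j\sum_{i\le j} d(P,P_i)\ \ge\ d\Big(P,\ \frac1j\sum_{i\le j}P_i\Big).
\]
The crux is that $\frac1j\sum_{i\le j}P_i$ is exactly the uniform distribution on $V_\omega$: for each position $x$, $\sum_{i\le j}\P\big(\mu^{-1}(i)=x\given\f_{j+1}\big)$ is the conditional probability that card $x$ lies in one of positions $1,\dots,j$, which equals $\one[x\in V_\omega]$ because $\f_{j+1}$ determines the set $\{1,\dots,j\}=V_\omega$. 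So the displayed lower bound is $d(P,\u)$ with $\u$ uniform on $V_\omega$, and Lemma~\ref{dent} (with $|V|=j\le n$) bounds it below by $\frac{c}{\log j}\,\ent(P)\ge\frac{c}{\log n}\,\ent(\mu,j)$ --- here $\ent(P)$, the relative entropy of $\law(\mu^{-1}(j)\given\f_{j+1})$ viewed as a vector of length $j$, is by definition $\ent(\mu,j)$. (If $j=1$ there is nothing to prove, since $\ent(\mu,1)=0$.)

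Combining the two displays gives
\[
\frac1j\sum_{i\le j}\ent\big(\mu c(i,j)\given\f_{j+1}\big)\ \le\ \ent(\mu\given\f_{j+1})-\frac{c}{\log n}\,\ent(\mu,j)
\]
as $\f_{j+1}$-measurable random variables. Taking expectations and applying the chain rule (Proposition~\ref{decomp} at index $j+1$) then finishes the argument: since $c(i,j)$ does not disturb positions $j+1,\dots,n$, the entropy attributable to those positions is the same for $\mu c(i,j)$ as for $\mu$, so those contributions cancel and one is left with $\frac1j\sum_{i\le j}\ent(\mu c(i,j))\le\ent(\mu)-\frac{c}{\log n}\,\e(\ent(\mu,j))$.

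The conceptual heart of the proof is the second paragraph: the passage, via projection and the averaging-to-uniform identity, from the pairwise entropy decrease to a distance-from-uniform that Lemma~\ref{dent} turns back into entropy. The main technical obstacle I expect is the bookkeeping in the reduction to $\f_{j+1}$ --- in particular checking that the distance $d$ of the unconditional pair of distributions is the $\f_{j+1}$-average of the conditional $d$-distances, which rests on the scaling identity $d(ap,aq)=a\,d(p,q)$ that follows from $d(p,q)=\frac{p+q}{2}f\big(\frac{p-q}{p+q}\big)$; once that is in place, everything else is two uses of convexity plus the two lemmas already established.
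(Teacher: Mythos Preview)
Your proof is correct and follows essentially the same route as the paper's: condition on $\f_{j+1}$, use that $\mu$ and $\mu(i,j)$ have the same conditional entropy so the loss equals $d$ of the two conditional laws, project via $\rho\mapsto\rho^{-1}(j)$, average over $i\le j$ using convexity to get $d(\u,\law(\mu^{-1}(j)\mid\f_{j+1}))$, apply Lemma~\ref{dent}, and finish with Proposition~\ref{decomp}. The only remark is that the technical obstacle you flag in your last paragraph --- relating unconditional $d$ to an $\f_{j+1}$-average of conditional $d$'s --- never actually arises, since (just as you in fact do) one works with conditional entropies throughout and takes a single expectation at the end.
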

\begin{proof}
Using the abuse of notation 
$\half \pi_1 + \half \pi_2$ for a random permutation whose
distribution is an even mixture of the distributions of $\pi_1$ and
$\pi_2$, 
we have
\[
\mu c(i, j) 
= \half \mu + \half 
\mu (i, j).
\]
Let $\law(X \given \f)$ denote the conditional 
distribution of random variable (or random
permutation)  $X$ given the sigma field $\f$.
Let $\mt = \mu(i,j)$ (i.e., the product of $\mu$ and the transposition $(i,j)$).
Note that $\mt$ and $\mu$ are the same, except that
$\mt^{-1}(i) = \mu^{-1}(j)$ and $\mu^{-1}(i) = \mt^{-1}(j)$
and 
recall that $i \leq j$. 
It follows that $\ent( 
\mt \given \f_{j+1}) = \ent(\mu \given \f_{j+1})$ 
and hence
$
\ent ( 
\mu
c(i,j) \given \f_{j+1}
) - \ent(\mu \given \f_{j+1}) = 
-d(
\law(
\mt \given \f_{j+1}),
\law(\mu \given \f_{j+1}))$. But by the projection lemma, 
\begin{eqnarray*}
d(
\law(
\mt \given \f_{j+1}),
\law(\mu \given \f_{j+1})) &\geq&
d(\law(\mt^{-1}(j) \given \f_{j+1}),
\law(\mu^{-1}(j) \given \f_{j+1})) \\
&=&
d( \law(\mu^{-1}(i) | \f_{j+1}), \law(\mu^{-1}(j) \given \f_{j+1})).
\end{eqnarray*}
Hence
\begin{eqnarray}
\nonumber
j^{-1} \sum_{i \leq j} \ent(\mu c(i, j)  \given \f_{j+1}) - \ent(\mu \given
\f_{j+1}) 
&=&
-j^{-1} \sum_{i \leq j} d(\law(\mu^{-1}(i) \given \f_{j+1}), 
\law(\mu^{-1}(j) \given \f_{j+1})) \\
\nonumber 
&\leq&
-d\Bigl( j^{-1}\sum_{i \leq j} \law(\mu^{-1}(i) \given \f_{j+1}), 
\law(\mu^{-1}(j) \given \f_{j+1}) \Bigr)
  \\
\nonumber 
&=&
-d\Bigl(\u, \law(\mu^{-1}(j) \given \f_{j+1})\Bigr) \\
&\leq& 
-{c \over \log n} \ent( \law(\mu^{-1}(j) \given \f_{j+1}),
\end{eqnarray}
where the first inequality is by Proposition \ref{conv} and the second is 
by Lemma \ref{dent}. 
Here $\u$ denotes the uniform distribution over 
$\{1, \dots, n\} - \{ \mu^{-1}(j+1), \dots, \mu^{-1}(n) \}$.
Taking expectations gives 
\begin{eqnarray}
\label{diff}
j^{-1} \sum_{i \leq j} \e(\ent(\mu c(i, j) \given \f_{j+1} )) - \e(\ent(\mu \given \f_{j+1})) 
&\leq&
-{c \over \log n} \ent( \mu , j).
\end{eqnarray}

Since $\ent(\mu, k) 
= \ent(\mu c(i,j), k)$ for all $k \geq j+1$, Proposition
\ref{decomp}
and equation (\ref{diff}) yield the lemma.
\end{proof}
\section{Main Theorem}
\label{secmain}
Let $\pi$ be a random permutation in $\S_n$ that is Monte 
(i.e., can be written in the form
(\ref{form})) and let $\pi_1, \pi_2, \dots$ be independent copies of
$\pi$. For $t \geq 1$
let $\pt = \pi_1 \cdots \pi_t$. \\
\\
{\bf Convention. }
We shall use the following convention throughout.
For integers $x$ with $1 \leq x \leq n$, we denote by {\it card $x$} 
the card initially in position $x$. \\
\\
For cards
$x$ and $y$, say that {\it $x$ collides with $y$ at time $m$}
if for some $i$ and $j$ we have
$\pm^{-1}(i) = x$, $\pm^{-1}(j) = y$, and 
$\pi_m$
has a collision of $i$ and $j$.

We will need the following definition.
\begin{definition}
  For a random variable $X$, 
a finite set $S$ and a real number $A \in [0,1]$, 
say that the distribution of $X$
is $A$-uniform over $S$ if 
\[
\P(X = i) \geq A|S|^{-1},
\]
for all $i \in S$. 
\end{definition}
\begin{remark}
If
$A < 1$ then the distribution of $X$ need not be concentrated on $S$. 
(But if $A = 1$, then $X$ is uniform over $S$.) 
\end{remark}

Our main theorem
is a generalization of Lemma 
\ref{mainlemma}. It generalizes from a collision to an arbitrary 
Monte shuffle, and it
bounds the loss in relative entropy after many
steps.
\begin{theorem}
\label{maintheorem}
Let $\pi$ be a Monte shuffle on $n$ cards. 
Fix an integer $t > 0$
and suppose that $T$ is a random variable taking values in $\{1, \dots, t\}$,
which is independent of the shuffles $\{\pi_i: i \geq 0\}$. 
For a card $x$, let $b(x)$
denote the first card 
to collide with $x$ after time $T$ (or
$b(x) = x$ if there is no such card). Define the match $m(x)$ 
of $x$ by
\[
m(x) := 
\left\{\begin{array}{ll}
b(x) & 
\mbox{if $x = b(b(x))$;}\\
x & \mbox{otherwise.}\\
\end{array}
\right.
\]
Suppose that for every card $i$ there is a constant $A_i \in [0,1]$ such 
that the distribution of $m(i)$ is $A_i$-uniform over $\{1, \dots, i\}$.
Let $\mu$ be an arbitrary random permutation that
is independent of $\{\pi_i : i \geq 0\}$.
Then 
\[
\ent(\mu \pt) - \ent(\mu) \leq {-C \over \log n} 
\sum_{k=1}^n A_k E_k,
\]
where $E_k = \e( \ent(\mu, k))$
and
$C$ is a universal constant.
\end{theorem}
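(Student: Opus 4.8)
The plan is to lift the argument of Lemma~\ref{mainlemma} from a single collision to the whole product $\pt$. The guiding picture is this: for a card $k$ whose match $m(k)$ has a strictly smaller label, the collision that realises this match becomes, once it is ``slid'' so as to act directly on the initial deck $\mu$, exactly a collision $c(m(k),k)$ of the kind treated in Lemma~\ref{mainlemma} (with its ``$i$'' equal to $m(k)$ and its ``$j$'' equal to $k$). Because distinct matched pairs involve disjoint cards, these collisions act on disjoint pairs of positions, so their entropy losses can be added; and the hypothesis that $m(k)$ is $A_k$-uniform over $\{1,\dots,k\}$ is precisely what lets one carry out the averaging step in the proof of Lemma~\ref{mainlemma} (Jensen via Proposition~\ref{conv}, then Lemma~\ref{dent}) with efficiency $A_k$ rather than $1$.

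Concretely, let $\Xhat$ be the $\sigma$-field generated by $T$ together with all of the randomness in $\{\pi_i\}$ \emph{except} the fair coins inside the collisions that realise the matches $m(\cdot)$. Conditionally on $\Xhat$ the set of matched pairs is fixed, and the key structural step is to commute each matching collision to the front of the product, using the identity that a collision of two fixed cards commutes, in distribution, past any shuffle --- concretely, a permutation $\sigma$ followed by a collision of positions $p,q$ equals a collision of positions $\sigma^{-1}(p),\sigma^{-1}(q)$ followed by $\sigma$. Sliding the matching collision of cards $i<k$ leftward past the shuffle prefix $\delta$ that precedes it (during which cards $i,k$ have travelled from positions $i,k$ to positions $\delta(i),\delta(k)$) turns it into a collision of positions $\delta^{-1}(\delta(i))=i$ and $\delta^{-1}(\delta(k))=k$; since matched pairs are disjoint, for any two pairs the four positions involved stay distinct throughout, so the resulting transpositions commute with one another. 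Hence, conditionally on $\Xhat$,
\[
\mu\pt \ \stackrel{d}{=}\ \mu\cdot\Big(\,\prod_{\{i,k\}} c(i,k)\Big)\cdot\rho' ,
\]
where the product runs over the matched pairs $\{i,k\}$ with $i<k$, the $c(i,k)$ are conditionally independent collisions of positions $i$ and $k$ (independent of $\mu$), and $\rho'$ --- the shuffle with all matching collisions deleted --- is fixed given $\Xhat$. The trailing fixed $\rho'$ leaves relative entropy unchanged (Proposition~\ref{det}), and since conditioning cannot decrease expected relative entropy we have $\ent(\mu\pt)\le\e\big(\ent(\mu\pt\given\Xhat)\big)$; so it is enough to bound $\ent\!\big(\mu\prod c(i,k)\big)$ conditionally on $\Xhat$ and then average over $\Xhat$.

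For that bound I would run the argument of Lemma~\ref{mainlemma} on each factor, processing the larger labels $k$ of the matched pairs and peeling positions off the top via Proposition~\ref{decomp}: exactly as there, the factor $c(m(k),k)$ with $m(k)<k$ leaves $\ent(\cdot,k')$ untouched for $k'>k$ and, by the projection Lemma~\ref{projection}, lowers the $\f_{k+1}$-term by at least $d\big(\law(\mu^{-1}(m(k))\given\f_{k+1}),\law(\mu^{-1}(k)\given\f_{k+1})\big)$. Conditionally on $\Xhat$ the partner $m(k)$ is fixed, so this is all one gets per pair; but on taking the expectation over $\Xhat$, and using that $m(k)$ is independent of $\mu$, the bound $\P(m(k)=i)\ge A_k/k$ valid for every $i\le k$ lets me replace the single partner by a \emph{uniform} one: since $\frac1k\sum_{i\le k}\law(\mu^{-1}(i)\given\f_{k+1})$ is the uniform law $\u$ on the $k$ unseen cards (the $i=k$ term contributing $d=0$), convexity of $d$ (Proposition~\ref{conv}) gives a loss of at least $A_k\,d\big(\u,\law(\mu^{-1}(k)\given\f_{k+1})\big)$, and Lemma~\ref{dent} turns this into at least $\frac{c}{\log n}A_k\,\ent(\mu,k)$. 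Taking expectations and summing over $k$, the total entropy loss is at least $\frac{c}{\log n}\sum_k A_k E_k$; reassembling via Proposition~\ref{decomp} gives the theorem with $C=c$.

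The step I expect to be the main obstacle is making the sliding of the matching collisions rigorous when they occur at different random times and positions, and keeping the bookkeeping honest: checking that distinct matched pairs never interfere (here disjointness of the pairs, hence of the four positions involved in any two of them, is essential), that the per-position entropy losses genuinely add rather than overlap, and that the conditional filtrations used in peeling off positions from the top are handled consistently for the intermediate configurations $\mu\prod_{k'>k}c(m(k'),k')$ as opposed to $\mu$ itself. Folding in the auxiliary time $T$, and carefully tracking the convention ``card $x=$ card initially in position $x$'' so that the losses are correctly charged to the $E_k$ of the initial deck $\mu$, are the remaining points that need care.
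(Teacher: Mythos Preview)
Your proposal is essentially the paper's own proof, and the peeling argument you sketch---process the collisions $c(k,m(k))$ one $k$ at a time, use Proposition~\ref{decomp} to isolate position $k$, apply the projection lemma, then Proposition~\ref{conv} and Lemma~\ref{dent} together with the $A_k$-uniformity of $m(k)$---matches the paper line for line, including the observation that the intermediate product $\prod_{k'>k}c(m(k'),k')$ (or, in the paper's ordering, $\nu_{k-1}=\prod_{i:m(i)\le i\le k-1}c(i,m(i))$) fixes both $k$ and $m(k)$, so that the $d(\cdot,\cdot)$ term is expressed in terms of $\mu^{-1}$ rather than the intermediate permutation.

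The one substantive difference is how the matching collisions are brought to act directly on $\mu$. You condition on a $\sigma$-field $\Xhat$ that excludes ``the fair coins inside the collisions that realise the matches'' and then slide those collisions to the front. As stated this is circular: which collisions realise matches might in principle depend on the very coins you are excluding. It happens not to be circular---for any card $x$, the time $\tau(x)$ and position of its first post-$T$ collision are determined by the $\nu_m$'s and the randomness up to time $T$ alone (since $x$'s trajectory is coin-free until it first collides), and two cards are matched precisely when their $\tau$'s coincide at partner positions; from this one checks that $\matches$ is measurable with respect to the rest of the randomness. But this needs to be said explicitly. The paper sidesteps the whole issue: rather than extracting the existing matching coins, it inserts \emph{fresh} independent collisions $c(i,m(i))$ in front of $\pt$ and observes that $\bigl[\prod_{i:m(i)\le i}c(i,m(i))\bigr]\pt$ has the same distribution as $\pt$, then conditions on $(\matches,\pt)$ and uses Proposition~\ref{det} to drop $\pt$. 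This lands on exactly the same object $\mu\prod c(i,m(i))$ to analyse, with no need to carve the matching coins out of the shuffle. Your sliding picture is precisely what underlies the paper's distributional identity, but the fresh-coin formulation is the cleaner way to package it.
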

\begin{proof}
Let $\matches = (m(i): 1 \leq i \leq n)$. 
For $i$ and $j$ with $j \leq i$, let
$c(i, j)$ be a collision of $i$ and $j$. Assume that all 
of the $c(i,j)$ are independent of $\mu$,
$\pt$ and each other. Note that
\[
\Bigl[ \prod_{i: m(i) \leq i} c(i, m(i)) \Bigr] \pt
\]
has the same distribution as $\pt$, so it is enough to bound the 
relative
entropy of the distribution of $\mu 
\Bigl[ \prod_{i: m(i) \leq i} c(i, m(i)) \Bigr] \pt$. 
By expressing this as a mixture 
of 
conditional distributions given
$\matches$ and $\pt$, and then using 
Jensen's inequality applied to $x \to x \log x$, 
the entropy can be bounded above by the expected value of
\begin{eqnarray}
\ent\Bigl( \mu
\Bigl[ \prod_{i: m(i) \leq i} c(i, m(i)) \Bigr] \pt \biggiven
\matches,
\pt\Bigr) 
&=&
\ent\Bigl( \mu
\Bigl[ \prod_{i: m(i) \leq i} c(i, m(i)) \Bigr] \biggiven
\matches,
\pt\Bigr) \\
&=&
\label{bigperm}
\ent\Bigl( \mu
\Bigl[ \prod_{i: m(i) \leq i} c(i, m(i)) \Bigr] \biggiven
\matches\Bigr),
\end{eqnarray}
where the first equality holds by Proposition \ref{det}
and the second
equality holds because the permutation $\mu$, the 
product of
collisions
$c(i, m(i))$ and $\pt$ are conditionally independent given $\matches$.  
For $1 \leq k \leq n$, let 
\[
\nu_k =
\prod_{i: m(i) \leq i \leq k} c(i, m(i)).
\]
Note that
the right hand side of (\ref{bigperm})
is $\ent( \mu \nu_n | \matches)$ and $\nu_0 = \id$.  
Since $\mu$ is independent of $\matches$, we have
$\ent(\mu \given \matches) = \ent(\mu)$ and hence
\[
\ent(\mu \nu_n \given \matches) - \ent(\mu) 
= 
\sum_{k=1}^n \ent(\mu \nu_k \given \matches) - \ent(\mu \nu_{k-1} \given \matches).
\]
Thus, it is enough to show that for every $k$ we have
\begin{equation}
\label{ourclaim}
\e\Bigl(
\ent( \mu \nu_k \given \matches) - 
\ent( \mu \nu_{k-1} \given \matches) 
\Bigr)
\leq {-CA_k E_k \over \log n}. 
\end{equation}
Note that if $m(k) > k$ then $\nu_k = \nu_{k-1}$. If $m(k) \leq k$
then
$\nu_k = 
\nu_{k-1} \, c(k, m(k)$).
We can now proceed in a way that is analogous to the proof of Lemma 
\ref{mainlemma}.
Note that
\[
\mu \nu_k 
= \half \mu \nu_{k-1} + \half \mu \nu_{k-1} (k, m(k)).
\]

Fix $i \leq k$, 
let $\lambda  = \mu \nu_{k-1}$ and let
$\lt = \lambda (k,i)$. Note that $\lt$ and $\lambda$ are the same, 
except that
$\lt^{-1}(k) = \lambda^{-1}(i)$ and $\lambda^{-1}(k) = \lt^{-1}(i)$.
Note also that $\nu_{k-1}$ has
$k+1, \dots, n$ as fixed points, so
$(\lambda^{-1}(k+1),\dots, 
\lambda^{-1}(n)) =
(\mu^{-1}(k+1),\dots, 
\mu^{-1}(n))$.
Let 
\begin{eqnarray*}
\f_{k+1} &=& \sigma( \mu^{-1}(k+1), \dots, \mu^{-1}(n)) \\
&=& \sigma( \lambda^{-1}(k+1), \dots, \lambda^{-1}(n)), \\
\end{eqnarray*}
and define
$\fkp = \sigma(\f_{k+1}, \matches)$. Then
we have $\enthat( 
\lt \given \fkp) = \enthat(\lambda \given \fkp)$ 
and hence
$$
\enthat( 
\lambda
c(k,i) \given \fkp
) - \ent(\lambda \given \fkp) = 
-d(
\law(
\lt \given \fkp),
\law(\lambda \given \fkp)).$$ 
But by the projection lemma, 
\begin{eqnarray*}
d\Bigl(
\law(
\lt \given \fkp),
\law(\lambda \given \fkp)\Bigr) &\geq&
d\Bigl(\law(\lt^{-1}(k) \given \fkp),
\law(\lambda^{-1}(k) \given \fkp)\Bigr) \\
&=&
d\Bigl( \law(\lambda^{-1}(i) | \fkp), \law(\lambda^{-1}(k) \given \fkp)\Bigr).
\end{eqnarray*}
Thus, since $m(k)$ is $\fkp$-measurable, on the event that $m(k) \leq
k$
we have 
\begin{eqnarray*}
\label{ourclaim2}
\ent( \mu \nu_k \given \fkp) - 
\ent( \mu \nu_{k-1} \given \fkp)
&=&
\enthat( 
\lambda
c(k,m(k)) \given \fkp
) - \ent(\lambda \given \fkp) \\
&\leq&
-d\Bigl( \law(\lambda^{-1}(m(k)) | \fkp), \law(\lambda^{-1}(k) 
\given \fkp)\Bigr) \\
&=& - \sum_{i \leq k} \one(m(k) = i) 
d\Bigl(\law(\mu^{-1}(i) \given \f_{k+1}), 
\law(\mu^{-1}(k) \given \f_{k+1})\Bigr),
\end{eqnarray*}
where in the third line 
we replaced $\lambda$ by $\mu$ 
because $\nu_{k-1}$ does not contain the collision $c(k, m(k))$ and
hence
has $k$ and $m(k)$ as fixed points, and 
we replaced the sigma field $\fkp$ by $\f_{k+1}$ because $\mu$ is
independent of $\matches$. 
Taking expectations gives
\begin{eqnarray}
\nonumber
\e\Bigl(
\ent( \mu \nu_k \given \fkp) - 
\ent( \mu \nu_{k-1} \given \fkp) 
\Bigr)
&\leq&
- \e\Bigl( 
\sum_{i \leq k} \P(m(k) = i) d\Bigl(\law(\mu^{-1}(i) \given \f_{k+1}), 
\law(\mu^{-1}(k) \given \f_{k+1})\Bigr)\Bigr) \\
\nonumber
&\leq&
- \e\Bigl( A_k k^{-1} 
\sum_{i \leq k} d\Bigl(\law(\mu^{-1}(i) \given \f_{k+1}), 
\law(\mu^{-1}(k) \given \f_{k+1})\Bigr) \Bigr) \\
&\leq&
\label{dstuff}
- \e \Bigl(A_k d\Bigl( k^{-1} \sum_{i \leq k} \law(\mu^{-1}(i) 
\given \f_{k+1}), 
\law(\mu^{-1}(k) \given \f_{k+1}) \Bigr) \Bigr).
\end{eqnarray}
where the second inequality follows by the $A_k$-uniformity of 
$m(k)$ 
and the independence of $m(k)$ and $\mu$, 
and the third inequality
is by Proposition \ref{conv}.
The first argument of $d(\cdot, \, \cdot)$ 
in the right hand side of equation
(\ref{dstuff}) is the uniform distribution over 
$\{1, \dots, n\} - \{\mu^{-1}(k+1), \dots, 
\mu^{-1}(n)\}$. Thus the right hand side of (\ref{dstuff}) 
is 
\begin{eqnarray}
& & - A_k \e \Bigl( d\Bigl(\u, \law(\mu^{-1}(k) \given \f_{k+1})
\Bigr) \Bigr)\\
\label{diff2}
&\leq& 
- {C A_k \over \log n} \e(\ent( \law(\mu^{-1}(k) \given \f_{k+1}))
=
- {C A_k E_k \over \log n},
\end{eqnarray}
where the inequality holds
by Lemma \ref{dent}. Since $\mu \nu_{k}$ and
$\mu \nu_{k-1}$ agree in positions
$k+1, \dots, n$, the portion of their respective entropies 
that is attributable to those positions
coincides, hence
Proposition
\ref{decomp} 
and equation (\ref{diff2}) yield the theorem.
\end{proof}
\begin{remark}
Since for any distribution $p$ we have
$d(p, p) = 0$, equation (\ref{dstuff})
is still true if $m(k)$ is  
only $A_k$--uniform over $\{0, \dots, k-1\}$. So the 
assumptions of the 
theorem 
can be relaxed so that there is no lower bound necessary on the 
probability that $m(k) = k$. 
\end{remark}

\section{Thorp shuffle}
\label{secthorp}
In this section we show that Theorem \ref{maintheorem}
implies an improved bound for the Thorp shuffle. 
Recall that the Thorp shuffle has the following description. 
Assume that the number of cards, $n$, is even.
Cut the deck into two equal piles. 
Drop the first card from the left pile or the
right
pile according to the outcome of a fair coin flip;  
then drop from the other pile. 
Continue this way, 
with independent coin flips deciding whether to drop {\sc left-right} or 
{\sc right-left} each time, until 
both piles are empty. 

We will actually work with the time reversal of the Thorp shuffle,
which clearly has the same mixing time. 
Suppose that we label the positions in the deck
$0, 1, \dots, n-1$. 
Note that the Thorp shuffle can be described in the following way.
Each step,
for $x$ with $0 \leq x \leq {n \over 2} - 1$, 
the cards at positions
$x$ and $x + n/2$ collide
and are moved to positions 
$2x \bmod \, n$ and $2x + 1 \bmod \, n$.
Thus, the time reversal can be described as follows. 
Each step, for even numbers $x \in \{0, \dots, n-2\}$, 
the cards in positions $x$ and $x+1$ collide
and are moved to positions $x/2$ and ${x/2}
+ n/2$. 

We write $\pt$ for a product of $t$ 
i.i.d.~copies of the
reverse Thorp shuffle.
Our main lemma is the following.

\begin{lemma}
\label{tlem}
Let $t = \lc \log_2 n \rc$. 
There is a universal constant $C$ such that
for any random permutation $\mu$ 
we have
\[
\ent(\mu \pt) \leq (1-C/\log^2 n) \ent( \mu).
\]
\end{lemma}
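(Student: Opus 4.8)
The plan is to deduce Lemma~\ref{tlem} from Theorem~\ref{maintheorem}, applied with this same $t = \lc \log_2 n\rc$ and a suitable random time $T\in\{1,\dots,t\}$. The Theorem gives $\ent(\mu\pt) - \ent(\mu) \le -\frac{C}{\log n}\sum_k A_k E_k$ with $E_k = \e(\ent(\mu,k))$, and $\sum_k E_k = \ent(\mu)$ by the $i=1$ case of Proposition~\ref{decomp}; so it is enough to produce $T$ and constants $A_k$ satisfying the hypotheses of the Theorem together with $\sum_k A_k E_k \ge \frac{c}{\log n}\,\ent(\mu)$ for a universal $c>0$, which yields $\ent(\mu\pt)\le(1-Cc/\log^2 n)\ent(\mu)$. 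Thus everything reduces to understanding, for each card $i$, the distribution of its match $m(i)$.

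First I would set up the combinatorics of the reverse Thorp shuffle in binary. Labelling positions $0,\dots,n-1$, one step moves a card at position $p$ to position $\lfloor p/2\rfloor + \frac{n}{2}\xi$, where $\xi$ is a fair bit and the two cards sharing a pair receive complementary bits. Iterating, after $T$ steps the position of card $i$ is obtained by discarding the lowest $T$ bits of $i$ and prepending $T$ fresh independent bits, so it is uniform over a dyadic coset of size $2^T$. Since in the Thorp shuffle every card collides at every step, $b(x)$ is simply the partner of $x$ at step $T+1$; this partnering is a perfect matching, so $b(b(x))=x$ always and hence $m(x)=b(x)$ for every $x$. A calculation with this shift-register structure then shows that, averaged over the coins, $m(i)$ is uniform over the $2^T$ cards that agree with $i$ in every bit above bit $T$ and have bit $T$ flipped.

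Next I would estimate $\P(m(i)=j)$ taking $T$ (close to) uniform on $\{1,\dots,t\}$. Given $j<i$, let $T^*$ be the highest bit in which $i$ and $j$ differ; then bit $T^*$ of $i$ equals $1$, so $j$ lies in the support of $m(i)$ conditioned on $T=T^*$, whence $\P(m(i)=j\given T=T^*)=2^{-T^*}\ge 1/i$ and $\P(m(i)=j)\ge \frac1t\cdot\frac1i$. Combined with the relaxation noted in the Remark after Theorem~\ref{maintheorem} (no lower bound is needed on $\P(m(i)=i)$), this gives the required $A_i$-uniformity with $A_i$ of order $1/\log n$ for most positions; one then feeds these $A_i$ into the Theorem and collects constants.

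The hard part is making this uniformity estimate robust enough to cover an \emph{arbitrary} $\mu$. The obstruction is the near-diagonal target $j=i\oplus 1$: for odd $i$ one has $m(i)=i\oplus1=i-1$ only when $T$ is maximal (for smaller $T$ the flipped-bit structure forbids it), so $\P(m(i)=i-1)$ is only of order $1/n$ rather than $1/i$, forcing $A_i$ as small as $\Theta(i/(n\log n))$ at that position. To salvage the argument I would use that the entropy able to pile up on such positions is limited --- $\ent(\mu,k)\le\log k$, so low positions carry little of $\ent(\mu)$, while even positions (never degraded, since there $i\oplus1>i$) always absorb a constant fraction of the remainder --- and, if needed, tune the law of $T$ or symmetrize using the reflection invariance $p\mapsto n-1-p$ of the reverse Thorp shuffle, so that the surviving $A_k$ still satisfy $\sum_k A_k E_k\ge c\,\ent(\mu)/\log n$. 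Verifying that the unavoidable degradation still leaves a constant fraction of $\sum_k E_k$ is the crux of the proof; removing the power-of-two hypothesis should then be essentially free, since the binary picture above degrades only mildly when $n$ is not a power of $2$.
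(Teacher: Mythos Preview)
Your overall strategy---apply Theorem~\ref{maintheorem} with an explicit description of the law of $m(i)$ coming from the shift-register structure of the reverse Thorp shuffle---is the same as the paper's. Your first two paragraphs are essentially correct for $n$ a power of~$2$: with $T$ uniform over $\{0,\dots,d-1\}$ (where $n=2^d$), the identity ``$m(i)$ is uniform over the $2^T$ cards agreeing with $i$ above bit~$T$ and with bit~$T$ flipped'' does yield $\P(m(i)=j)\ge \frac{1}{d}\cdot 2^{-T^*}\ge \frac{1}{d\,i}$ for every $j<i$, hence $A_i=1/d$ for all~$i$, and the lemma follows. This is actually a cleaner route than the paper's, which instead partitions positions into dyadic intervals $I_m$, locates the interval $I_{m^*}$ carrying the most entropy, and then chooses $T$ with a \emph{geometric-like} law $\P(T=r)\ge 2^{r-m^*-1}$ so as to get $A_i=1/4$ on $I_{m^*}$ only. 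Both routes produce the same $1/\log^2 n$ contraction.

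However, your third paragraph is confused, and there is a genuine gap. The ``obstruction'' you describe has the direction of $T$ backwards: for odd~$i$ the event $m(i)=i-1=i\oplus 1$ occurs when the \emph{lowest} bit is flipped, i.e.\ when $T=0$, not when $T$ is maximal; and conditionally on $T=0$ it has probability~$1$, not $1/n$. So once $T=0$ is in the support (as the paper in fact allows), there is nothing to salvage and no need for the reflection/entropy-budget arguments you sketch. The real gap is the non-power-of-two case, which you dismiss as ``essentially free''. It is not: when $n/2$ is not a power of~$2$ the clean prepend-a-bit picture breaks down (the offset $n/2$ is no longer a single high bit), and one must argue as the paper does via $f(x)=\lfloor x/2\rfloor$, the contraction estimate~(\ref{closer}), and the parity claim $\P(X_m(j)\text{ even})\ge \half$ established by writing $n/2=2^k l$ with $l$ odd and coupling two trajectories that differ in the offset at step $m-k$. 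That argument is the substantive content behind removing the power-of-two hypothesis, and your proposal does not supply a substitute for it.
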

\begin{proof}
Partition the locations $0, \dots, n-1$ into 
intervals $I_m$ as follows.
Let $I_0 = \{0\}$, and 
for $m = 1,2,\dots, \lc \log_2 n \rc,$ define $I_m = \{2^{m-1}, \dots, 2^m-1\}
\cap \{0, \dots, n-1\}$.

For $i \in \{0, \dots, n-1\}$, 
define $E_i = \ent(\mu, i)$. We can write the entropy of $\mu$ as 
\[
\ent(\mu) = \sum_{m} \sum_{i \in I_m} E_i.
\]
Let $\mss$ be the value of $m$ that maximizes $\sum_{i \in m} E_i$. Then
\[
\sum_{j \in I_\mss} E_j \geq {c \over \log n} \ent(\mu),
\]
for a constant $c$. 
Since the reverse Thorp shuffle is in Monte form, we 
may use Theorem \ref{maintheorem}.
We will also use the remark immediately following Theorem
\ref{maintheorem}, which says that the distribution of the card 
matched with $i$ need only be $A_i$ uniform over $\{j: j < i\}$
in order for the conclusions of the theorem to hold.
Fix $m$ with $1 \leq m \leq \lc \log_2 n \rc$.
We will show that the assumptions of the theorem hold with $t = \lc \log_2 n \rc$, 
\[
A_i = 
\left\{\begin{array}{ll}
1/4 
& 
\mbox{if $i \in I_\ms$;}\\
0
& \mbox{otherwise,}\\
\end{array}
\right.
\]
and 
the random variable $T$ defined as follows.   
Let $T$ be any random variable that satisfies
\be
\lab{goodtime}
\P(T = r) \ge 2^{r - \ms - 1}, 
\ee
for $r = 0, \dots, \ms$. \\

Fix $i \in I_\ms$.
We shall show that for any $j < i$ we have 
$\P(m(i) = j) \ge 1/4i$.
Define $f: \Z \to \Z$ by $f(t) = \floor{t/2}$.
Note that if $X_s(j)$ denotes the position
of card $j$ at time $s$, then $X_s(j) = f(X_{s-1}(j)) + Z_s(j)$,
where $Z_s(j)$ is a random  ``offset'' 
whose
distribution is uniform over $\{0, n/2\}$. Note that in step of 
the shuffle, the distance between a pair of cards is cut roughly in 
half if they have the same offsets. More precisely,
if $x > y$ then 
\be
\lab{closer}
f(x) - f(y)
\leq
\left\{\begin{array}{ll}
(x-y)/2 & 
\mbox{if $x$ is odd or $y$ is even;}\\
(x-y)/2 + \half & \mbox{otherwise.}\\
\end{array}
\right.
\ee
It follows that
$\ceiling{ \log_2(f(x) - f(y))} \leq 
\ceiling{ (\log_2 (x - y))}$
and 
$\ceiling{ \log_2(f(x) - f(y))} \leq 
\ceiling{ (\log_2 (x - y))} -1$
unless $x = y+1$ and $x$ is even. 

Say that two positions $x$ and $y$ 
are neighbors if $|x - y| = 1$ and 
$\min(x,y)$ is even. (Note that in each step
of the reverse Thorp shuffle,  the neighbors collide.) 
Since $n$ is even we can write 
$n/2$ = $2^k l$ for some $k \geq 0$ and odd integer $l$. 
Fix $i$ and $j$ with $j \leq i$. 
We shall show that $\P(m(i) = j) \geq 1/4i$.

First, we claim that $\p(\mbox{$X_m(j)$ is even}) \geq \half$.  
To see this, note that $f^\ms(j) = 0$, 
where we write $f^r$ for the $r$-fold iterate of $f$.
Hence, if $m \leq k$, then $X_\ms(j) = \sum_{r=0}^{\ms-1} 2^{-r}
Z_{\ms-r}(j)$. Each of the $Z_{\ms-r}(j)$ is either $0$ or 
$2^k l$, so each term in the sum is even.
Assume now that
$\ms > k$. 
Suppose that the value of $Z_{\ms-k}(j)$ (which is either 
$0$ or $n/2$) is determined by an unbiased coin flip. For $\ms - k \leq s \leq \ms$, 
let $X'_s(j)$ be what the position of card $j$ at time $s$ would have been 
if the outcome of the coin flip determining $Z_{\ms-k}$ had been 
different. Since $f(x) - f(y) = \half(x-y)$ if $x - y$ is even, 
it follows that $| X'_s(j) - X_s(j) | = 2^{\ms-s}l$
for $\ms-k \leq s \leq \ms$. Thus 
$| X'_\ms(j) - X_\ms(j) | = l$, which is odd. So one of 
$X'_\ms(j)$ and $X_\ms(j)$ is odd and the other is even.
Since they have the same distribution, they are each even with 
probability $\half$. 

  Let $y_0 = X_0(i)$, and for $s \geq 1$ let $y_s = f(y_{s-1}) +
  Z_s(j)$,
i.e., where card $i$ would be located after $s$ steps if its offsets were the
  same
as those for $j$. 
Let $\tau = \min\{s: |y_s - X_s(j)| = 1$ and $X_s(j)$ is even$\}$. 
Since $|i - j| \leq 2^\ms$ equation (\ref{closer}) 
and the sentence immediately following it
imply that
there must be a value of $s \leq \ms$ such that
$|y_s - X_s(j)| = 1$. Combining this with the fact that 
$X_\ms(j)$ is even with probability at least $\half$ gives
$\P(\tau \leq \ms) \geq \half$. Furthermore, 
given $\tau = r$, the conditional probability that $X_s(i) = y_s$ for 
$0 \leq s \leq r$ (and hence $i$ and $j$ collide at time $\tau$) 
is $2^{-r}$. Finally,
since
assumption (\ref{goodtime}) gives
$\P(T = r) \geq 2^{r - \ms - 1}$, 
It follows that $\P(m(i) = j) \geq 
2^{-\ms-2} \geq {1 \over 4i}$. 

  We have shown that the assumptions of Theorem 
(\ref{maintheorem}) are met with $t = \lc \log_2 n \rc$ and $A_i = 1/4$
for $i \in I_\ms$.
Applying this with $m = \mss$
shows that
for any permutation $\mu$, 
we have $\ent( \mu \pt) \leq (1 - C/\log^2 n) \ent(\mu)$, 
for a universal constant $C$.
It follows that 
for any $B \in \{1 ,2, \dots\}$ we have
\begin{eqnarray*}
\ent( \pi_{(B t \log^3 n)}) &\leq& (1 - C/\log^2 n)^{B \log^3 n} \,\ent(\id)  \\
&\leq&
n^{1 - CB} \log n,
\end{eqnarray*} 
since $\ent(\id) = \log n! \leq n \log n$ and $1-u \leq  e^{-u}$ for all $u$.
If $B$ is large enough so that $n^{1-CB} \log n \leq \eighth$ for all $n$, then
$\ent( \pi_{(Bt \log^3 n)}) \leq \eighth$ and hence
$|| \pi_{(Bt \log^3 n)} - \u || \leq \quarter$ by equation (\ref{totent}).
It follows that the mixing time is at most $Bt \log^3 n = O(\log^4 n)$.
\end{proof}

\section{$L$-reversal chain}
\label{seclrev}
In this section we analyze Durrett's $L$-reversal chain. 
Recall that the $L$-reversal chain has two parameters, $n$ and $L$.
The cards are located at the
vertices of an $n$-cycle, which we label
$\{0, \dots, n -1\}$. Each step, a vertex $v$ 
and a number $l \in \{0, \dots, L\}$
are
chosen independently and uniformly at random.
Then the interval of cards $v, v+1, \dots, v + l$ 
is reversed, where the numbers are taken mod $n$.
Equivalently, each step a (nonempty) interval of length 
at most $L$ (i.e., of size between $1$ and $L+1$) is 
chosen uniformly at random and reversed. 
We shall assume that $L > L_0$ 
for a suitable value of $L_0$ and $n \geq 4L$.
The cases where $L$ is constant and 
where $n \leq cL$ for a constant $c$ were both treated in 
\cite{durrett}.

We put the shuffle in Monte form as follows.
Let $\mu_{i,j}$ denote the permutation that reverses the cards in
positions $i, i+1, \dots, j$ and leaves the rest unchanged.
Let $Z$ be uniform over $\{1, \dots, L\}$. 
Choose $v$ uniformly at random from $\{0, \dots, n-1\}$
and let
\be
\lab{putinmonte}
\pi
=
\left\{\begin{array}{ll}
\mu_{v, v+L} & 
\mbox{with probability ${1 \over 2(L+1)}$;}\\
\mu_{v, v+L-1} & 
\mbox{with probability ${1 \over 2(L+1)}$;}\\
\mu_{v, v+Z} c(v, Z) & 
\mbox{with probability ${L \over (L+1)}$.}\\
\end{array}
\right.
\ee
Since $\mu_{v, v+2}(v, v+2) = \id$
and $\mu_{v, v+1}(v, v+1) = \id$,
it is easily verified that
$\pi$ has the distribution of an $L$-reversal shuffle.

We
write
$\pt$ for a product of $t$ i.i.d.~copies of the
$L$-reversal shuffle. 
Our main technical lemma is the following.
\begin{lemma}
\label{tl}
There is a universal constant $C$ such that
for any random permutation $\mu$ 
there is a value of  
$t \in \{1, \dots, 
{Cn^3 \over L^3}\}$
such that 
\[
\ent(\mu \pt) \leq (1-f(t)) \ent( \mu),
\]
where $f(t) = {\gamma \over \log^2 n} \Bigl( {t \over n} \wedge 1\Bigr)$,
for a universal constant $\gamma$. 
\end{lemma}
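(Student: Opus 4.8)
The argument is patterned on the proof of Lemma~\ref{tlem}. Since the $L$-reversal shuffle is already in Monte form, via (\ref{putinmonte}), Theorem~\ref{maintheorem} applies, and the plan is to invoke it with: (i) a relabeling of the $n$ positions adapted to where the entropy of $\mu$ is concentrated, (ii) a number of steps $t$ tied to a dyadic length scale $d$, and (iii) a random time $T\in\{1,\dots,t\}$, independent of the shuffles, whose law is spread over the appropriate window, in the spirit of (\ref{goodtime}). The one genuinely new feature compared with the Thorp shuffle is that under $L$-reversals two cards at distance $d$ are brought together only \emph{diffusively}, in about $d^2n/L^3$ steps rather than $\log d$; this is exactly why the lemma can only assert that \emph{some} $t$ works, and why the gain degrades to $f(t)=\tfrac{\gamma}{\log^2 n}\bigl(\tfrac tn\wedge1\bigr)$ once $t<n$.

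\textbf{Step 1 (choosing a scale).} For dyadic $d$ with $2\le d\le 2n$, cut the $n$-cycle into $\lceil n/d\rceil$ consecutive arcs of length $\le d$. For an arc $B$ and a permutation $\pi$ write $H_d(B)=\ent\bigl(\pi\!\restriction\!B\mid\pi\!\restriction\!(V\setminus B)\bigr)$; by Proposition~\ref{decomp} this equals $\sum_{k\in B}\e(\ent(\pi,k))$ for any labeling in which $B$ carries the labels $1,\dots,|B|$, listed along $B$, so that $B$ is revealed last and $\{1,\dots,i\}$ is an initial sub-arc of $B$ for each $i\le|B|$. Two simple facts are used: submodularity of conditional entropy makes $d\mapsto\sum_B H_d(B)$ nondecreasing and equal to $\ent(\mu)$ at the coarsest scale; and averaging over the two orientations of an arc yields a labeling of the heaviest scale-$d$ arc $B$ with $\sum_{i\in B} i\,\e(\ent(\mu,i))\ge\tfrac12|B|\,H_d(B)$, so the entropy weight sits on the high labels. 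Setting $t_d$ to be a suitable constant times the number of steps needed for a pair at distance $d$ to meet and collide (of order $d^2n/L^3$, truncated into $\{1,\dots,\lceil Cn^3/L^3\rceil\}$), a pigeonhole over the $O(\log n)$ scales selects a scale $d$, an arc $B$ and an orientation with $H_d(B)\ge\tfrac{c}{\log n}\ent(\mu)$; in the worst case this is the coarsest scale, $d\asymp n$, $t_d\asymp n^3/L^3$, and it is only for smaller-scale $\mu$ that a smaller $t_d$ becomes available.

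\textbf{Step 2 (the matching estimate — the crux).} Fix $d$, $B$ and the orientation of Step~1, relabel accordingly, put $t=t_d$, let $A_i$ be a suitable multiple of $i\bigl(\tfrac{t_d}{n}\wedge1\bigr)$ for $i\in B$ and $A_i=0$ otherwise, and take $T$ with the spread-out law above. By the remark following Theorem~\ref{maintheorem} it suffices to show, for a card $i$ at the $i$-th vertex of $B$ and any earlier vertex $j$,
\[
\P\bigl(m(i)=j\bigr)\ \gtrsim\ \frac{1}{|B|}\,\Bigl(\frac{t_d}{n}\wedge1\Bigr).
\]
The outline: after $T\asymp t_d$ steps a card has moved a typical distance $\asymp d$, so the cards that can meet the one now near $i$ are precisely those starting in $B$ and its two neighbouring arcs; moreover the chain restricted to a window of $\asymp d$ positions has by then run for $\asymp d^3/L^3$ of its own reversal-steps, i.e.\ past its relaxation time, so inside such a window the arrangement of the cards present is close to uniform — this is a random-walk estimate for $L$-reversals, proved by Fourier analysis of the one-card and two-card kernels, and it is the main technical obstacle. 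Consequently the first card to collide with $i$ after time $T$ — which occurs within $(T,t_d]$ with probability $\gtrsim\tfrac{t_d}{n}\wedge1$, since any given card is in a collision (in the sense of (\ref{putinmonte})) about once every $n$ steps — is nearly uniform over the $\asymp d$ cards near $i$, giving $\P(b(i)=j)\gtrsim\tfrac1{|B|}(\tfrac{t_d}n\wedge1)$; and the mutuality $b(j)=i$ holds with constant conditional probability, again because $j$ is in a collision only about once every $n$ steps while the relevant window has length $O(n)$. Choosing the law of $T$ (as in (\ref{goodtime})) and using its independence of the shuffles turns this into a proof.

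\textbf{Conclusion.} With $A_i\asymp i\bigl(\tfrac{t_d}{n}\wedge1\bigr)/|B|$, the displayed estimate gives the $A_i$-uniformity hypothesis of Theorem~\ref{maintheorem}, and the orientation choice of Step~1 gives $\sum_k A_kE_k\gtrsim\bigl(\tfrac{t_d}{n}\wedge1\bigr)H_d(B)\gtrsim\tfrac1{\log n}\bigl(\tfrac{t_d}{n}\wedge1\bigr)\ent(\mu)$. Theorem~\ref{maintheorem} then yields $\ent(\mu\pt)-\ent(\mu)\le-\tfrac{C}{\log n}\sum_k A_kE_k\le-\tfrac{\gamma}{\log^2 n}\bigl(\tfrac{t_d}{n}\wedge1\bigr)\ent(\mu)=-f(t_d)\ent(\mu)$, and since $t_d\le Cn^3/L^3$ this is the lemma with $t=t_d$. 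I expect Step~2 — the $L$-reversal random-walk input (local mixing of one and two cards at scale $d$ in $\asymp d^2n/L^3$ steps, plus the collision-frequency bookkeeping) — to be the hard part; matching that input against the loss $f(t)$ uniformly in $L$, and checking $t_d$ never exceeds the budget $Cn^3/L^3$, is the remaining, more routine, work.
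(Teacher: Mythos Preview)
Your broad plan — locate a dyadic scale carrying a $1/\log n$ share of $\ent(\mu)$ and feed the corresponding matching estimate into Theorem~\ref{maintheorem} — matches the paper's, but two substantive pieces are missing.

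First, the decomposition and the bottom scale. The paper does not relabel or slide arcs around the cycle: it fixes the cyclic labeling $0,\dots,n-1$ once and for all, sets $I_0=\{0,\dots,L\}$ and $I_k=\{2^{k-1}L+1,\dots,2^kL\}$ for $1\le k\le m=\lceil\log_2(n/L)\rceil$, and pigeonholes over this \emph{partition} to find $k^*$ with $\sum_{j\in I_{k^*}}E_j\ge\ent(\mu)/(m+1)$. (Your arc/submodularity scheme does not obviously give a single arc with a $1/\log n$ share: at scale $d$ there are $\asymp n/d$ arcs.) More importantly, when $k^*=0$ the paper abandons Theorem~\ref{maintheorem} altogether: it takes $t=1$, conditions on the event that the reversed interval lies inside $\{0,\dots,L\}$, and invokes the log Sobolev constant of the $L$-reversal chain on the line $\{0,\dots,L\}$ (quoting \cite{durrett}) to contract the $\f_{L+1}$-conditional entropy directly. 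Your Step~2 cannot replace this: at arc-lengths $d\ll L$ the Monte collision partner from (\ref{putinmonte}) is uniform over a range of size $L$, not $d$, so your target $\P(m(i)=j)\gtrsim\tfrac{1}{|B|}(\tfrac{t_d}{n}\wedge1)$ overshoots by a factor $L/d$ and the $A_i$-uniformity fails.

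Second, for $k^*\ge1$ your Step~2 is only an outline, and the paper's argument is quite different from the Fourier/local-mixing picture you propose. The paper first proves that both the pair-distance process $Z_m=|x_m-y_m|$ and a ``cut-stopped'' variant (absorbed at $0$ or $\infty$ when $x$ or $y$ is first cut) are \emph{monotone} Markov chains; then, with $t=4^{k^*}Cn/L^3$ and $T=t/2$, uses a random-walk coupling plus Berry--Esseen to show $Z_m\le\beta L$ for some $m\le T$ with probability bounded below; and finally runs a time-reversal counting argument to show that the first post-$T$ cut of $x$ or $y$ finds them within distance $L$ with probability $\gtrsim(\tfrac tn\wedge1)L/y$. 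The monotonicity is precisely what converts this \emph{averaged} bound (over partners $j$) into the \emph{pointwise} lower bound on $\P(m(y)=x)$ that Theorem~\ref{maintheorem} requires; your sketch has no analogue of this step, and ``nearly uniform over the $\asymp d$ cards near $i$'' is asserted, not proved.
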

Before proving Lemma \ref{tl}, we first show how 
it gives the claimed mixing time
bound.
\begin{lemma}
\label{mixingtime}
The mixing time for the $L$-reversal chain is 
$O\Bigl( 
(n \vee {n^3 \over L^3})\log^3n  \Bigr)$.
\end{lemma}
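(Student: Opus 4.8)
The plan is to deduce this lemma from Lemma~\ref{tl} by iteration, much as the mixing‑time bound for the Thorp shuffle was obtained from Lemma~\ref{tlem}. First note that it suffices to control $\ent(\pi_{(s)})$ for the chain started at the identity: each step right‑multiplies the current permutation by an independent random permutation, so from an arbitrary start $\sigma$ the deck is distributed as $\sigma\,\pi_{(s)}$, and since $\tau\mapsto\sigma\tau$ is a measure‑preserving bijection of $S_n$, both $\ent(\sigma\,\pi_{(s)})=\ent(\pi_{(s)})$ and the total variation distance of $\sigma\,\pi_{(s)}$ to $\u$ equals that of $\pi_{(s)}$. Recall also $\ent(\id)=\log n!\le n\log n$. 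Write $M_L=n\vee\frac{n^3}{L^3}$ for the target order of magnitude.

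Next I would build times $0=s_0<s_1<s_2<\cdots$ inductively: given $s_{j-1}$, apply Lemma~\ref{tl} to the distribution $\mu_{j-1}$ of $\pi_{(s_{j-1})}$ (a random permutation independent of the shuffles to come), obtaining an increment $t_j\in\{1,\dots,Cn^3/L^3\}$ with $\ent(\mu_{j-1}\,\pi_{(t_j)})\le(1-f(t_j))\,\ent(\mu_{j-1})$; set $s_j=s_{j-1}+t_j$. Using fresh independent copies for the new block, $\mu_{j-1}\,\pi_{(t_j)}$ is distributed as $\pi_{(s_j)}$, so $\ent(\pi_{(s_j)})\le(1-f(t_j))\,\ent(\pi_{(s_{j-1})})$; iterating and using $1-u\le e^{-u}$ gives, for every $J$,
\[
\ent(\pi_{(s_J)})\ \le\ n\log n\cdot\exp\left(-\sum_{j=1}^{J}f(t_j)\right).
\]

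The only arithmetic needed is the bound $f(t)\ge\frac{\gamma}{C\log^2 n}\cdot\frac{t}{M_L}$ for every $t\in\{1,\dots,Cn^3/L^3\}$: if $t\le n$ then $f(t)=\frac{\gamma}{\log^2 n}\cdot\frac{t}{n}\ge\frac{\gamma}{\log^2 n}\cdot\frac{t}{M_L}$ since $n\le M_L$; if $t>n$ then $f(t)=\frac{\gamma}{\log^2 n}$ while $t\le Cn^3/L^3\le C M_L$, so again $f(t)\ge\frac{\gamma}{C\log^2 n}\cdot\frac{t}{M_L}$ (we may assume $C\ge1$). Hence $\sum_{j=1}^{J}f(t_j)\ge\frac{\gamma}{C\log^2 n}\cdot\frac{s_J}{M_L}$. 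Now fix a large constant $B$, set $M=B\,M_L\log^3 n$, and let $J$ be least with $s_J\ge M$ (it exists since $s_j\ge j$). Then
\[
\ent(\pi_{(s_J)})\ \le\ n\log n\cdot\exp\left(-\frac{\gamma B}{C}\log n\right)\ =\ n^{\,1-\gamma B/C}\log n,
\]
which is at most $\eighth$ for all $n$ once $B$ is large enough in terms of $\gamma,C$; so $\|\pi_{(s_J)}-\u\|\le\sqrt{\half\,\ent(\pi_{(s_J)})}\le\quarter$ by~(\ref{totent}). Finally $s_{J-1}<M$ and $t_J\le Cn^3/L^3\le C M_L$ give $s_J<M+C M_L\le(B+C)\,M_L\log^3 n$, so the mixing time is $O\left((n\vee\frac{n^3}{L^3})\log^3 n\right)$.

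All the substance lies in Lemma~\ref{tl}; the only thing to watch in this reduction is the two‑case estimate for $f(t)$, since $Cn^3/L^3$ may sit above or below $n$ and both the linear range ($f(t)\propto t$) and the saturated range ($f(t)\equiv\gamma/\log^2 n$) of $f$ must be handled — together with the fact that the increments $t_j$ are at most $Cn^3/L^3$, which is what keeps the stopping time $s_J$ within a constant factor of the target $M$.
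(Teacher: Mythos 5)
Your proposal is correct and follows essentially the same route as the paper: iterate Lemma~\ref{tl} to produce increments $t_j$, convert the per-block entropy contraction into exponential decay via $1-u\le e^{-u}$, and use the key arithmetic fact (stated in the paper as $t/f(t)\le \gamma^{-1}\log^2 n\,(n\vee t)$, in yours as $f(t)\gtrsim t/(M_L\log^2 n)$) to bound the total elapsed time, finishing with inequality~(\ref{totent}). The two-case analysis of $f$ and the observation that each $t_j\le Cn^3/L^3$ keeps the final time within a constant factor are exactly the points the paper's proof relies on as well.
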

\begin{proof}
Let $t$ and $f$ be as defined in Lemma \ref{tl}. Then 
\be
\lab{rell}
{t \over f(t)} = \gamma^{-1} (\log^2 n)  t \Bigl({n \over t} \vee 1 \Bigr)
= \gamma^{-1} \log^2 n ({n \vee t}) \leq T,
\ee
where $T = \gamma \log^2 n [ n \vee {Cn^3 \over L^3}]$.
Note that $1/T$ is a bound on the long run rate of entropy loss
per unit of time.
Lemma \ref{tl} implies that
there is a 
$t_1 \in \{1, \dots, 
{Cn^3 \over L^3}\}$
such that 
\[
\ent(\pi_1 \cdots \pi_{t_1}) \leq (1-f(t_1)) \ent( \id),
\]
and a 
$t_2 \in \{1, \dots, 
{Cn^3 \over L^3}\}$
such that 
\[
\ent(\pi_1 \cdots \pi_{t_1 + t_2}) \leq (1-f(t_2)) 
\ent( \pi_1 \cdots \pi_{t_1}),
\]
etc. Continue this way to define $t_3, t_4$, and so on. 
For $j \geq 1$ let $\tau_j = \sum_{i=1}^j t_i$. 
Then
\begin{eqnarray}
\ent( \pi_{(\tau_j)}) &\leq& \Bigl[\prod_{i=1}^j (1 - f(t_j)) \Bigr] 
\ent(\id)\\
&\leq& \exp\Bigl( - \sum_{i=1}^{j} f(t_j) \Bigr) \ent(\id).
\end{eqnarray}
But since $t_j \leq T f(t_j)$ by equation (\ref{rell}), we have
\[
\tau_j = \sum_{i=1}^{j} t_j
\leq T \sum_{i=0}^{j} f(t_j).
\]
It follows that
\begin{eqnarray}
\ent( \pi_{(\tau_j)}) &\leq& 
\exp\Bigl({-\tau_j \over T} \Bigr) \ent(\id).
\end{eqnarray}
Since $\ent(\id) = \log n! \leq  n\log n$, it follows that
if $\tau_j \geq 
T \log(8n \log n)$ 
we have $\ent(\pi_{(\tau_j)}) \leq \eighth$
and hence $|| \pi_{(\tau_j)} - \u || \leq \quarter$
by equation (\ref{totent}). It follows that 
the mixing time is $O(T \log(8n \log n)) = 
O\Bigl( 
(n \vee {n^3 \over L^3})\log^3n  \Bigr)$.
\end{proof}
We shall now prove Lemma \ref{tl}.
\begin{proofof}{Proof of Lemma \ref{tl}}
Let $m = \lceil \log_2 (n/L) \rceil$. Then we can partition  the 
set of locations $\{0, \dots, n-1\}$ into $m+1$ intervals as 
follows. Let $I_0 = \{0, \dots, L\}$, and
for $1 \leq k \leq m$ define 
$I_k = \{2^{k-1} L + 1, \dots, 2^k L\}
\cap \{0, \dots, n-1\}$.  
Define $E_k = \e(\ent(\mu, k))$. 
Note that we can write the entropy of $\mu$ as 
\be
\lab{eee1}
\ent(\mu) = \sum_{k=0}^m \sum_{j \in I_k} E_j\, .
\ee 
Thus, if $\kss$ maximizes $\sum_{j \in I_k} E_j$, then
\[
\sum_{j \in I_\kss} E_j \geq {1 \over m+1} \ent(\mu).
\]
Suppose first that $\kss=0$. Then we can take $t=1$. 
Let $\pi$ be a random permutation corresponding to one move of the
$L$-reversal chain. 
Let $E$ be the event that $\pi$ 
reverses $a, a+1, \dots, b$ 
for $a,b \in \{0, \dots, L\}$.
Then (using an abuse of notation similar to that in 
Section  \ref{abuse})
we can write $\pi$ as
\[
\pi = \alpha \pi_1 + (1 - \alpha) \pi_2,
\]
where $\alpha = \P(E)$, $\pi_1$ is 
$\pi$ conditioned on $E$, 
and  $\pi_2$ is $\pi$ conditioned on $E^c$. 
Then $\mu \pi = \alpha \mu \pi_1  + (1 - \alpha) \mu \pi_2$ and hence
\begin{eqnarray}
\ent(\mu \pi) 
&=& \ent (
\alpha \mu \pi_1 + (1 - \alpha) \mu \pi_2) \\
&\leq& \alpha \ent( \mu \pi_1) + (1- \alpha) \ent(\mu \pi_2) \\
&\leq& \alpha \ent( \mu \pi_1) + (1- \alpha) \ent(\mu),
\end{eqnarray}
where both inequalities follow from the convexity of $x \to x \log x$.
It follows that
\be
\lab{ebou}
\ent( \mu \pi) - \ent(\mu) \leq \alpha \Bigl[
\ent(\mu \pi_1) - \ent( \mu) \Bigr].
\ee

Note that $\pi_1$
does not move any of the cards in  
locations $\{L+1, \dots, n\}$.
Hence by Proposition \ref{decomp}, the entropy difference 
$\ent(\mu \pi_1) - \ent(\mu)$ is the expected loss in entropy
attributable to positions $\{0, \dots, L\}$, i.e., 
$\e\Bigl( \ent( \mu \pi_1 \given \f_{L+1}) 
- \ent(\mu \given \f_{L+1}) \Bigr)$, where 
$\f_{L+1} = \sigma( \mu^{-1}(L+1), \dots, \mu^{-1}(n-1))$. 
The permutation $\pi_1$
is a step
of a modified $L$-reversal chain 
on the $L  + 1$ cards in 
the line graph $\{0, \dots, L$\},
reversing an interval 
of the form $a, a+1, \dots, b$ for $0 \leq a \leq b \leq L$.   

  In Theorem 6 of \cite{durrett}, it is shown 
(by comparison with shuffling through random transpositions
\cite{ds}; 
see \cite{dsc} for background on comparison techniques)
that the log Sobolev
constant for the $L$-reversal chain on $n$ cards
is at most $B {n^3 \over L^2} \log n$ for a constant $B$. 
This remains true if 
we consider the modified $L$-reversal process on the line graph.
Thus $\pi_1$ has a log Sobolev constant that is at most
$2 B {L} \log L$, and hence 
(by the well-known relationship between the log Sobolev constant and decay of 
relative entropy; see, e.g., \cite{miclo})
multiplying $\mu$ by $\pi_1$ reduces the relative entropy
by at least $1/B' L \log L$ times the entropy attributable to 
positions $\{0, \dots, L\}$, for a constant $B'$. Thus
the right hand side of (\ref{ebou}) is at most
\begin{eqnarray} 
-\alpha(B'L \log L)^{-1} \sum_{j \in I_1} E_j 
&\leq& - (8B'n \log L)^{-1} \sum_{j \in I_1} E_j \\
&=&
- (8B'n \log^2 n)^{-1} \ent(\mu),
\end{eqnarray}
where the second line follows from the fact that $\alpha \geq {L \over
  8n}$.
 
 Next we shall consider the case where $\kss \geq 1$, 
so that the interval is of the form
$\{2^{k-1}L + 1, \dots, 2^k L\} \cap \{0,1 \dots, n-1\}$.
We will use Theorem \ref{maintheorem} to get a decay of entropy in
this case. 
We make the following claim.
\begin{claim}
\label{lclaim} 
Fix $k \geq 1$. There are universal constants $C$ and $\alpha > 0$ such that 
if $t = 
{4^k C n / L^3}$,
$T = t/2$
and 
\[
A_y = 
\left\{\begin{array}{ll}
\alpha ({t \over n} \wedge 1) 
& 
\mbox{if $y \in I_k$;}\\
0
& \mbox{otherwise,}\\
\end{array}
\right.
\]
then the assumptions of Theorem \ref{maintheorem} are satisfied 
by $t, T,$ and the $A_y$.
\end{claim}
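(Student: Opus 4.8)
\medskip
\noindent\textbf{Proof plan.}
By the remark immediately following Theorem~\ref{maintheorem}, it is enough to verify that, for $t$, $T=t/2$ and the $A_y$ as in the claim, the distribution of $m(y)$ is $A_y$-uniform over $\{0,\dots,y-1\}$ for every card $y$. This is vacuous when $y\notin I_k$ (there $A_y=0$), so fix $y\in I_k$, so that $2^{k-1}L<y\le 2^kL$ and in particular $y\asymp 2^kL$; since $A_y=\alpha\A\in[0,1]$ whenever $\alpha\le 1$, the only thing to prove is
\be
\lab{cl-want}
\P(m(y)=j)\ \ge\ A_y/y\ =\ \alpha\A/y\qquad\mbox{for every }j\in\{0,\dots,y-1\}.
\ee
The starting point is a local reformulation of $\{m(y)=j\}$: since $m(y)=j$ means $b(y)=j$ and $y=b(b(y))=b(j)$, this event occurs iff, among the steps $T+1,\dots,t$ that contain a collision touching card $y$ or card $j$, the chronologically first one is a collision between $y$ and $j$. (A step containing a $y$--$j$ collision is simultaneously the first $y$-touching and first $j$-touching step and so forces $b(y)=j$ and $b(j)=y$; conversely, if the first $\{y,j\}$-touching step were a collision of $y$ with some $z\neq j$ then $b(y)\neq j$, and if there is no $\{y,j\}$-touching step at all then $m(y)=y\neq j$.)

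\smallskip
Next I would read the one-step probabilities off the Monte form~(\ref{putinmonte}). With probability $\frac{L}{L+1}$ a step produces a collision of two positions at cyclic distance $Z$ --- the left endpoint uniform on the cycle, $Z$ uniform on $\{1,\dots,L\}$ --- and otherwise no collision. Hence, conditioning on the configuration at the start of a step and writing $d$ for the current cyclic distance between $y$ and $j$: the conditional probability that this step collides $y$ with $j$ is $\frac{1}{n(L+1)}\one(d\le L)$, while the conditional probability that it produces \emph{any} collision touching $y$ (or $j$) is $\frac{2L}{n(L+1)}\le\frac{2}{n}$; here I use $n\ge 4L$, so that positions at cyclic distance $\le L$ are the two endpoints of a unique short interval. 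Combining this with the reformulation above and conditioning each term of the resulting sum on the outcome of the first $s-1$ steps gives the \emph{exact} identity
\be
\lab{cl-exact}
\P(m(y)=j)\ =\ \frac{1}{n(L+1)}\,\e\big[N^*\big],
\ee
where $N^*$ is the number of steps $s\in\{T+1,\dots,t\}$ such that no collision touches $y$ or $j$ in steps $T+1,\dots,s-1$ and $y,j$ lie at cyclic distance $\le L$ at the start of step $s$. Equivalently $N^*=\#\{s\in(T,\rho\wedge t]:\ d_{s-1}\le L\}$, where $\rho$ denotes the first step of $(T,t]$ carrying a collision touching $y$ or $j$ and $d_{s-1}$ is the cyclic distance between the two cards at the start of step $s$.

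\smallskip
It remains to show $\e[N^*]\ge c'\,\min(t,n)\,L/y$ for a universal $c'>0$; then~(\ref{cl-exact}) and $\frac{L}{L+1}\ge\frac12$ give $\P(m(y)=j)\ge\frac{c'}{2}\,\min(t,n)/(ny)=\frac{c'}{2}\,\A/y$ (using $\min(t,n)/n=\A$), so~(\ref{cl-want}) holds with $\alpha=c'/2$. Two estimates suffice. \emph{(a)} $\P(\rho\ge s)\ge e^{-1}$ for $T<s\le T+n/8$: the per-step probability of a collision touching $y$ or $j$ is at most $4L/(n(L+1))<4/n$, so $\rho-T$ stochastically dominates a geometric variable of mean $n/4$. \emph{(b)} A diffusive/local-CLT estimate: writing $X_s(\cdot)$ for positions at time $s$, the relative displacement $Y_s=X_s(y)-X_s(j)$ is --- outside the band $|Y|\le L$, and up to extra dynamics inside it that only increase its local time there --- a lazy symmetric random walk on the $n$-cycle moving with probability $\Theta(L/n)$ per step by a step of size $\Theta(L)$, hence of diffusivity $\Theta(L^3/n)$ per step. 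By time $T=t/2$ this walk has spread to a scale $\asymp y$ --- which is exactly what the quantitative choice of $t$ (with its universal constant $C$) ensures, the point being that by time $T$ the single-card walk makes $\Theta(4^k)$ moves --- and that scale dominates $|Y_0|=|y-j|\le y$ and stays $\asymp y$ throughout the window $s\in(T,\min(t,T+n/8)]$; since conditioning on the absence of a prior collision perturbs the pair's joint trajectory only at the $O(1/L)$ level, a local CLT then gives
\be
\lab{cl-loc}
\P\big(d_{s-1}\le L\,,\ \rho\ge s\big)\ \ge\ c_2\,\P(\rho\ge s)\,\frac{L}{y}
\ee
uniformly on that window. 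Summing~(\ref{cl-loc}) over the $s\in(T,\min(t,T+n/8)]$ (a range containing at least $\frac{1}{16}\min(t,n)$ integers) and using (a) yields $\e[N^*]\ge \frac{c_2}{16}e^{-1}\min(t,n)\,L/y$, as desired.

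\smallskip
The crux is the estimate~(\ref{cl-loc}): a local-CLT lower bound on the probability that cards $y$ and $j$ are within cyclic distance $L$ (jointly with no prior collision), uniform over the window and robust to (i) the arbitrary starting offset $|Y_0|\le y$, (ii) cyclic wrap-around when $y$ is comparable to $n$, (iii) the non-free behaviour of $Y$ inside the band $|Y|\le L$, and (iv) the conditioning on $\rho\ge s$. I would obtain it by comparing $Y$, away from the band, with a genuinely free lazy random walk --- with which it agrees there --- using a crude first-passage bound to control the time $Y$ needs to reach the band, and a standard local CLT for the free walk; the effect of conditioning on $\rho\ge s$ would be absorbed by a short coupling, using that collisions form an $O(1/L)$-fraction of a card's moves and do not bias its move directions.
\medskip
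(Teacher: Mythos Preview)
Your exact identity~(\ref{cl-exact}) and the reformulation of $\{m(y)=j\}$ are correct and clean, and the arithmetic reducing the claim to a lower bound on $\e[N^*]$ is fine. The approach, however, is genuinely different from the paper's, and the difference matters.

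The paper does \emph{not} try to prove a pointwise local-CLT bound like your~(\ref{cl-loc}). Instead it proceeds in two stages. First, a Berry--Esseen CLT (not a local CLT) shows that with probability bounded away from zero the two cards come within distance $\beta L$ by time $T/2$; this only needs the distribution-function approximation, and the Berry--Esseen error term in the paper's display~(\ref{be}) is of order $L/y$, which is small only because the target event there is ``hit $[0,L]$ at some time'' rather than ``be in $[0,L]$ at a fixed time''. Second, starting from distance $\le\beta L$, the paper computes the \emph{sum} $\sum_j f(i,j)$ over all partners $j$ by a time-reversal counting argument, restricts to partners that never wander beyond distance $Dy$, and then invokes the monotonicity of the distance process and of the cut-stopped process (Lemmas~\ref{monlem1} and~\ref{undom}) to conclude that $g(i,j)$ is nonincreasing in $|i-j|$, so its value at $|i-j|\le\beta L$ is at least the average over $|i-j|\le Dy$. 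This averaging-plus-monotonicity step is precisely what converts an average-over-pairs bound into the pointwise bound you need, without ever touching a local limit theorem.

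Your plan, by contrast, asks for~(\ref{cl-loc}) directly for each fixed pair $(y,j)$. This is where the gap lies. A Berry--Esseen bound gives $\P(|Y_{s-1}|\le L)$ only up to an additive error of the same order $L/y$ as the main term, so it cannot produce the lower bound you want. A genuine local CLT would suffice, but the process $Y_s$ is not an i.i.d.\ random walk: inside the band $|Y|\le L$ the two cards can lie in the same reversed interval and the transition kernel changes. Your sketch (``compare with a free walk away from the band'', ``extra dynamics inside it that only increase its local time there'') is plausible heuristics but not a proof; in particular, once $Y$ enters the band and leaves again, the coupling to the free walk is lost, and you have no mechanism to recover a density lower bound at the later time $s-1$. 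The additional conditioning on $\rho\ge s$ is, as you note, a mild perturbation, but it does not help with the main difficulty.

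In short: your decomposition is correct and arguably more transparent than the paper's, but it shifts the entire burden onto a local-CLT estimate that is not standard and that you have not established. The paper sidesteps exactly this by proving monotonicity of the distance process and using it to trade a local estimate for an averaged one.
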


In order to prove this claim, it is helpful to know that the $L$-reversal chain enjoys  
certain monotonicity properties. Roughly speaking, the closer two cards are together, 
the more likely they are to collide after a given number of steps. 
Before proving Claim \ref{lclaim}, we shall verify these monotonicity
properties. \\
\\
{\bf Two types of monotonocity.}
Fix $x$ and $y$ in $\{0, \dots, n\}$ 
and let $x_m$ and $y_m$ denote
the positions of cards $x$ and $y$, respectively, at time $m$. 
Define $Z_m = |x_m - y_m|$, i.e., the graph distance 
between $x_m$ and $y_m$ in the $n$-cycle. Note that 
$Z_m$ is a Markov chain. 
We shall need the following lemma.
\begin{lemma}
\label{monlem1}
Let $\phat$ denote the transition matrix of $Z_m$. 
Then $\phat$ is monotone, i.e.,
if $b \geq a$ then $\phat(b, \cdot) \sgeq \phat(a, \cdot)$, 
where $\sgeq$ denotes stochastic domination. 
\end{lemma}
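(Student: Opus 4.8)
I want to show the distance chain $Z_m = |x_m - y_m|$ on $\{0,1,\dots,n\}$ has a monotone transition matrix, i.e. that the one-step distance $Z_1$, started from $a$, is stochastically dominated by $Z_1$ started from $b$ whenever $a \le b$. The natural route is a \emph{coupling} argument: I will construct, on a common probability space, two versions of one step of the $L$-reversal chain — one in which cards $x,y$ are at distance $a$, and one in which they are at distance $b$ — in such a way that the post-step distances satisfy $Z_1^{(a)} \le Z_1^{(b)}$ pointwise. Monotonicity of $\phat$ is equivalent to the existence of such a coupling for every pair $a \le b$, so this suffices.

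First I would set up the geometry. Place the two ``$a$-cards'' at positions $0$ and $a$ on the $n$-cycle, and the two ``$b$-cards'' at positions $0$ and $b$, so that the pair at distance $b$ has the pair at distance $a$ ``nested'' inside it sharing the left endpoint $0$. A single move picks an interval $J$ (a uniformly random arc of length between $1$ and $L+1$ on the $n$-cycle) and reverses it; I use the \emph{same} random interval $J$ for both chains. The effect on a pair of cards at positions $p < q$ is: if $J$ contains neither, both stay; if $J$ contains exactly one, that one gets reflected within $J$; if $J$ contains both, the \emph{gap} $q-p$ is preserved (a reversal is an isometry on the cards it contains) — though the pair may wrap around the cycle, so ``distance'' is $\min(\,\cdot\,, n - \cdot\,)$. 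The key structural point: since the $a$-pair is nested in the $b$-pair, whenever $J$ straddles the configuration in a way that separates the endpoints, it does so ``more'' for the wider pair. I would then go through the handful of cases for how $J$ meets the points $\{0, a, b\}$ and check in each case that the new $a$-distance is $\le$ the new $b$-distance; the cases where $J$ contains $0$ but lands between $a$ and $b$, or contains $a$ but not $b$, are the ones that actually do work and need care.

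The honest way to organize the case analysis is to prove a deterministic lemma first: for a fixed reversal of a fixed interval $J$ on the $n$-cycle, if $0 \le a \le b$ and $(a', b')$ denote the resulting distances of the two nested pairs, then $a' \le b'$. Monotonicity then follows by integrating over the uniform choice of $J$. For the deterministic lemma, the cleanest device is to note that on the line (before worrying about the cyclic wrap), the map induced by a single reversal on positions is \emph{piecewise} an isometry and the displacement of a point is a unimodal/monotone function of where it sits relative to $J$, so $|{\rm new}(0) - {\rm new}(a)|$ and $|{\rm new}(0) - {\rm new}(b)|$ are comparable; then one checks that the cyclic correction $t \mapsto \min(t, n-t)$ is order-preserving on $[0, n/2]$ and that, under the assumption $n \ge 4L$ (available in this section) combined with $|J| \le L+1$, all relevant distances stay in the regime where the nesting is not destroyed by wrap-around.

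**Main obstacle.** The hard part will be the bookkeeping around the cyclic wrap-around: on the cycle ``distance'' is not $|x_m - y_m|$ literally but $\min(|x_m-y_m|, n-|x_m-y_m|)$, and a single long reversal can push a nested pair to straddle the point diametrically opposite, potentially inverting the nesting. I expect this is controlled by the standing hypothesis $n \ge 4L$ (so no single interval, of length $\le L+1 \le n/4$, can move a pair far enough to flip which of the two gaps is shorter once both are below $L$), but making that precise — deciding exactly which configurations of $a, b, J$ one must consider and verifying the inequality survives the $\min$ — is where the real work lies; the purely linear cases are routine isometry arithmetic.
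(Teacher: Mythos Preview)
Your coupling idea --- use the \emph{same} random interval $J$ for both the $(0,a)$ pair and the $(0,b)$ pair and then check the deterministic inequality $a'\le b'$ case by case --- does not work. The deterministic lemma you state is false. Take $a=3$, $b=4$, and let $J=\{0,1,2,3\}$ (so $L\ge 3$). Both endpoints of the $a$-pair lie in $J$, so their distance is preserved: $a'=3$. For the $b$-pair, $0\in J$ but $4\notin J$; the reversal sends $0\mapsto 3$, so the $b$-pair becomes $(3,4)$ and $b'=1$. Thus $a'=3>1=b'$. The problem is exactly the case you flagged as ``needing care'': when $J$ swallows the inner pair but only catches one endpoint of the outer pair, the outer pair can be pulled much closer together while the inner pair's distance is frozen. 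No amount of wrap-around bookkeeping via $n\ge 4L$ will repair this, since the example is entirely local.

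So the identity coupling cannot certify monotonicity here; any coupling proof would have to match intervals for the two chains in a nontrivial, non-diagonal way. The paper sidesteps this entirely and argues directly with transition counts: it writes down $N(a,u)$, the number of legal intervals that move position $a$ to position $u$ while fixing position $0$, and from it $M(a,u)$, the number of intervals taking distance $a$ to distance $u$. One then checks from the explicit formula that $M(a,u)$ is nonincreasing in $a$ for $u<a$ and nondecreasing in $a$ for $u>a$, which is enough to give $\phat(b,\cdot)\sgeq\phat(a,\cdot)$ for $b\ge a$ by summing tail probabilities. If you want to rescue a coupling argument, you would essentially need to exhibit a bijection between intervals realizing the counting inequality --- at which point you are doing the paper's computation anyway.
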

\begin{proof}
Fix positions $u$ and $a$ with $a \leq n/2$, and let $N(a, u)$ 
denote the number of legal intervals (i.e., intervals
of length at most $L$) that move the card in position 
$a$ to position $u$ without moving the card in position $0$. Then
\[
N(a,u) = 
\left\{\begin{array}{ll}
\min(u, \lfloor \half(L - a + u) + 1 \rf)
& \mbox{if $u < a$;}\\
\min(a, \lfloor \half(L - u + a) + 1 \rfloor)
& \mbox{if $u > a$.}\\
\end{array}
\right.
\]
(Recall that we assume that $n \geq 4L$.)
Suppose that $|x_m - y_m| = a$. 
For $u \leq n/2$, let $M(a,u)$ denote the number of legal intervals whose reversal 
at time $m$
would 
make $|x_{m+1} - y_{m+1}| = u$. If $a \neq u$ then $M(a,u)$ counts 
intervals that move $x$ but not $y$ and 
intervals that move $y$ but not $x$. Thus we have 
$M(a,u) = 2(N(a,u) + N(a,n-u))$.
It is easily verified that $M(a, u)$ is nonincreasing in $a$ for $u < a \leq n/2$ 
and nondecreasing in $a$ for $0 < a < u$. It follows that $Z_m$ is monotone. 
\end{proof}

We now prove that $Z_m$ has another type of monotonicity
property. 
Note that in each move of the $L$-reversal process, there are
exactly four cards that are adjacent to a different pair of cards
after the move than they were before. We 
say that those cards are {\it cut} and write, e.g.,  
 ``card $i$ is cut at time $m$''. We say that a location
 is cut if the card in that location is cut. \\
\\
{\bf The cut-stopped process.}
It will be convenient to consider a
modified version $Z'_m$
of 
$Z_m$,  where we introduce two 
absorbing states
$\sgood$ and $\sbad$, and have the following occur when
either $x$ or $y$ is cut. If $x$ and $y$
are within a distance $L$ of each other, then $Z'_m$ transitions to 
$\sgood$; otherwise, it transitions to $\sbad$. 

We shall call this modified process the {\it cut-stopped process.}
We can impose an order on the
state space of $\{Z'_m: m \geq 0\}$ based on the order of the 
positive integers, 
with the additional states $\wgood$ and $\wbad$ as the minimum 
and maximum states, respectively. 

Our next lemma 
says 
that 
the cut-stopped process
$Z'_m$ is monotone with respect to this order.
\begin{lemma}
\label{undom}
The cut-stopped process is monotone. 
\end{lemma}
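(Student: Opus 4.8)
The goal is to show that the cut-stopped process $Z'_m$ is monotone with respect to the order that places $\wgood$ at the bottom, then the positive integers in their usual order, then $\wbad$ at the top. Since $Z'_m$ is a Markov chain, it suffices to exhibit, for any two states $a \le b$ (in this order), a coupling of one step of the chain started from $a$ with one step started from $b$ such that the resulting states $a',b'$ satisfy $a' \le b'$ almost surely. The plan is to build this coupling by coupling the \emph{underlying $L$-reversal move} — that is, the randomly chosen interval — simultaneously for both configurations, using the same interval (same uniformly random left endpoint $v$ and same length $l$) in each, and then track what happens to the two tagged pairs.

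First I would handle the easy absorbing cases: if $a = \wgood$ then $a' = \wgood$ is the minimum, so nothing to check; if $b = \wbad$ then $b' = \wbad$ is the maximum, again nothing to check. So assume $a,b$ are both positive integers with $a \le b$ (interpreting these as the current distances $|x_m - y_m|$ in two configurations — note the pair of cards, the ``tagged'' cards, can be thought of as sitting on an $n$-cycle at distance $a$ in one copy and distance $b$ in the other). The main content is then the coupling of the non-absorbed step. Here I would reuse the geometric bookkeeping from Lemma \ref{monlem1}: for a fixed chosen interval, whether the interval cuts the tagged cards, moves one but not the other, moves both, or moves neither, and what the new distance is, depends in a controlled, monotone way on the current distance. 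The key structural facts I expect to need are: (i) in the non-cut case, the new distance is a monotone function of the old distance for each fixed interval outcome, which is essentially the computation underlying Lemma \ref{monlem1} (the functions $N(a,u)$, $M(a,u)$ there are exactly the counts that let one realize this coupling interval-by-interval); and (ii) in the cut case, being cut at distance $\le L$ sends you to $\wgood$ (the bottom) and being cut at distance $> L$ sends you to $\wbad$ (the top), so I must check that whenever the smaller-distance copy gets cut-into-$\wbad$, the larger-distance copy also gets cut-into-$\wbad$, and whenever the larger-distance copy gets cut-into-$\wgood$, the smaller-distance copy also does.

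Concretely, I would argue as follows. Couple by choosing one interval and applying it to both copies after identifying the two tagged-card pairs by an order-preserving map of the cycle (align one tagged card of each copy at position $0$, so the other tagged card is at position $a$ in the first copy and at position $b \ge a$ in the second; this is where $n \ge 4L$ matters, so that ``distance'' is unambiguous and the relevant intervals lie in an arc of length $< n/2$). For a chosen interval: if it contains neither tagged card in either copy, both distances are unchanged and $a' = a \le b = b'$. If it cuts a tagged card, one checks that the set of intervals cutting a tagged card in the distance-$b$ copy \emph{contains} those that both cut a tagged card in the distance-$a$ copy and would have left the two cards at distance $>L$, while the ``cut to $\wgood$'' event for the distance-$a$ copy is contained in that for the distance-$b$ copy only after further case analysis — so I may instead define the coupling not interval-by-interval but by the standard monotone-coupling recipe: list the target states of the one-step distribution from $a$ in increasing order and from $b$ in increasing order, and couple so that $a'$ is the $U$-quantile of $\law(Z'_1 \mid Z'_0 = a)$ and $b'$ is the $U$-quantile of $\law(Z'_1 \mid Z'_0 = b)$ for a single uniform $U$; this automatically gives $a' \le b'$ provided $\law(Z'_1 \mid Z'_0 = b) \sgeq \law(Z'_1 \mid Z'_0 = a)$. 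So the lemma reduces to the \emph{stochastic-domination} statement $\phat'(b,\cdot) \sgeq \phat'(a,\cdot)$ for the cut-stopped transition matrix $\phat'$, and this is what I would actually verify, piggybacking on Lemma \ref{monlem1}: Lemma \ref{monlem1} already gives it for the un-stopped chain, and adding the cut-stopping only \emph{raises} mass at $\wgood$ for small distances and at $\wbad$ for large distances in a way consistent with the order — so I would show that the extra mass moved to $\wbad$ from state $b$ dominates (in the ordering sense) the extra mass moved to $\wbad$ from state $a$, and symmetrically for $\wgood$, then combine with the monotonicity of the un-stopped kernel via the fact that a mixture/refinement of monotone kernels is monotone.

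The main obstacle I anticipate is the bookkeeping in the cut case: one has to verify that the \emph{joint} event ``a tagged card is cut \emph{and} the pre-cut distance exceeds $L$'' has a probability that is monotone in the current distance, and that it meshes correctly with the placement of $\wgood$ and $\wbad$ at the two ends of the order (in particular, the distance-$L$ threshold interacts with which intervals are ``legal''). The cleanest route is to not fight this head-on but to reduce everything, as above, to a single stochastic-domination inequality $\phat'(b,\cdot)\sgeq\phat'(a,\cdot)$ and then prove \emph{that} by the explicit interval counts $N(\cdot,\cdot)$, $M(\cdot,\cdot)$ of Lemma \ref{monlem1} together with the observation that cut-stopping is a deterministic post-processing (send ``cut at distance $\le L$'' to $\wgood$, ``cut at distance $>L$'' to $\wbad$) that is itself an order-preserving map on the pre-cut outcome space, and composing a monotone kernel with an order-preserving deterministic map yields a monotone kernel.
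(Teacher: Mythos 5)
Your proposal takes essentially the same route as the paper: reduce the lemma to the stochastic-domination statement $\phat'(b,\cdot)\sgeq\phat'(a,\cdot)$ for the cut-stopped kernel, observe that the probabilities of absorbing at $\wgood$ and $\wbad$ are respectively nonincreasing and nondecreasing in the current distance, and verify the remaining (non-cut) part by redoing the interval counts of Lemma \ref{monlem1} restricted to intervals that do not cut either tagged card --- which is exactly what the paper does with its primed counts $N'(a,u)$ and $M'(a,u)$. One caveat: your closing ``cleanest route'' of viewing cut-stopping as an order-preserving deterministic post-processing of the un-stopped outcome is not literally valid (whether a cut occurs is determined by the chosen interval, not by the resulting distance, so the cut-stopped kernel does not factor through the un-stopped one), but this does not affect the proposal since the direct counting verification you describe first is the argument that actually carries the proof.
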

\begin{proof}   
The proof is a slight modification of the proof of Lemma
\ref{monlem1}. 
Suppose that $Z'_m = z$. 
Note that the probability of absorbing in $0$ in the next step is a nonincreasing
function of $z$, and the probability of absorbing in $\infty$ in the next step is 
a nondecreasing function of $z$. 
The rest of the 
argument is almost identical to the proof of 
Lemma \ref{monlem1}. 
Fix positions $u$ and $a$ with $a \leq n/2$, and let $N'(a, u)$ 
denote the number of intervals 
of length at most $L$ that 
move the card in position 
$a$ to position $u$, but neither move the card in position $0$, 
cut position $0$, nor cut position $a$. Then
\[
N'(a,u) = 
\left\{\begin{array}{ll}
\min(0, u-2, \lfloor \half(L - a + u) \rf)
& 
\mbox{if $u < a$;}\\
\min(0, a-2, \lfloor \half(L - u + a)\rfloor)
& \mbox{if $u < a$.}\\
\end{array}
\right.
\]
Suppose that $|x_m - y_m| = a$. 
For $u \leq n/2$, let $M'(a,u)$ denote the number of legal intervals 
that don't cut $x$ or $y$ and whose reversal 
at time $m$
would 
make $|x_{m+1} - y_{m+1}| = u$. If $a \neq u$ then 
$M'(a,u) = 2(N'(a,u) + N'(a,n-u))$.
It is easily verified that $M'(a, u)$ is nonincreasing in $a$ for $u < a \leq n/2$ 
and nondecreasing in $a$ for $0 < a < u$. It follows that $Z'_m$ is monotone. 
\end{proof}
\noindent
We are now ready to prove Claim \ref{lclaim}.
For the convenience of the reader, we state the claim again.
Recall that 
$I_k = \{2^{k-1} L + 1, \dots, 2^k L\}
\cap \{0, \dots, n-1\}$.  
\\
\\
{\bf Claim \ref{lclaim}} 
{\it There are universal constants $C$ and $\alpha>0$ such that 
if $t = 
{4^k C n / L^3}$,
$T = t/2$
and 
\[
A_y = 
\left\{\begin{array}{ll}
\alpha ({t \over n} \wedge 1) 
& 
\mbox{if $y \in I_k$;}\\
0
& \mbox{otherwise,}\\
\end{array}
\right.
\]
then the assumptions of Theorem \ref{maintheorem} are satisfied 
by $t, T,$ and the $A_y$.}
\begin{proof}
Let $y \in I_k$. We need to show that if $x \leq y$, then with
probability at least $A_y$, 
cards $x$ and $y$ collide between time $T$ and time $t$,
and this is the first collision that either is involved in after time 
$T$. 

Fix $y \in I_k$ and $x$ with $x < y$.  
Let $\tau$ be the first time after time $T$ that either $x$ or $y$
is cut. 
Note that if $x$ and $y$ collide at time $\tau$ and $\tau \leq t$ then $m(x) = y$. 
Thus, given that $|x_\tau - y_\tau| \leq L$ and $\tau \leq t$ 
the conditional 
probability that $m(x) = y$ is at least $1/8L$. 
This is because the number of intervals that cut either $x$
or $y$ is at most $4L$, so the conditional probability that $x$ and $y$ are 
at the endpoints of the interval that is reversed at time $\tau$
is at least $1/4L$. The conditional probability that $x$ and $y$
collide
is at least half of this. 

Thus it is enough
to show that
for a universal constant $\alpha$ we have
\begin{equation}
\label{dagger}
\P(|x_\tau - y_\tau| \leq L, \tau \leq t) \geq \alpha \A L/y.
\end{equation}
For $m \geq 0$ let $Z_m = |x_m - y_m|$. 
Let $\beta > 0$ be a 
constant and suppose that $L > 2\beta$.
We claim that with probability bounded away from $0$ we have 
$|Z_m| \leq \beta L$ for some $m < T$.
To see this, let $M = \min\{m: Z_m \leq L\}$.
First, we will show that with probability bounded away from 
zero we have $M \leq T'$, where $T' = T/2$.
Suppose that $Z_0 > L$. Let $X$ be a random variable with the
distribution of $Z_1 - Z_0$ and let $X_1, X_2, \dots$ be i.i.d.~copies
of $X$. Note that 
the random variable $Z_{T'} - Z_0$ can be coupled with 
the $X_i$ in such a way that
$Z_{T'} - Z_0 \leq \sum_{1=1}^{T'} X_i$
on the event that $M >
T'$. It follows that $\P(M \leq T') \geq \P( \sum_{i \leq T'} X_i \geq
-Z_0)$.
But since when $X$ 
is nonzero (which happens with probability on the order of $L/n$)
it has a typical value on the order of $L$, it has second and third
moments satisfying 
$\sigma^2 \geq 
{C_2L^3/n}$ and 
$\rho \leq {C_3 L^4/n}$,
respectively. 
Berry Esseen bounds (see, e.g.,
\cite{durrettbook}) imply that for a universal constant 
$C_{B}$ we have
\be
\lab{be}
| F_{T'}(x) - \Phi(x)| \leq {C_{B} \rho \over \sigma^3 \sqrt{T'}}
\leq {C' L \over C 
y},
\ee
where $F_{T'}$ is the cumulative distribution function (cdf) of 
${1 \over \sigma \sqrt{T'}} \sum_{i \leq
  T'} X_i$, $\Phi$ is the standard normal cdf, $C'$ is a constant that 
incorporates $C_2, C_3$ and $C_b$, and 
$C$ is the constant appearing in the definition of $t$. 
For the final inequality  we use the fact that $t=4T'$ is within constant 
factors of $C y^2 n/L^3$, since $y \in I_k$. 

Since $y
\geq L$, the quantity (\ref{be}) can be made arbitrarily close to
zero for sufficiently large $C$. It follows that 
$\sum_{i \leq T'} X_i$ is roughly normal with standard deviation 
a large constant times $y$, hence is less than $-Z_0$ with probability bounded
away from zero. 
(Recall that $Z_0 = y - x \leq y$). 
It follows that with probability bounded away from
zero we have $Z_m \leq L$ 
for some $m \leq T/2$. Now note that if $x$ and $y$ are within
distance $L$ then given that one of them moves in the next step, the 
conditional probability that they are brought to within a distance
$\beta L$ is bounded away from zero. 
Since $t$ is much larger than $n/L$, 
there is probability bounded away from zero 
that either $x$ or $y$ is moved between time $m$ and $m + T/2$. This
verifies the claim.

The above claim and the strong Markov property
imply that in order to show (\ref{dagger}),
it is enough to show that 
if 
$|i - j| \leq \beta L$, 
$m' \leq T/2$ and  $\tau$ is the first time that $i$ or $j$ is cut 
after time $m'$,
then for a universal constant $\alpha > 0$ we have
$\P(|i_\tau - j_\tau| \leq L, \tau \leq m' + t/2) \geq \alpha \A L/y$.

For every pair of cards $i$ and $j$, 
let $T(i,j)$ be the first time that either $i$ or $j$ 
is cut after 
time $m'$. 
Define $t' = \min(t/2, n)$. 
Let $A(i,j)$ be the event that
$T(i,j) \leq m' + t'$
and 
at
time $T(i,j)$ 
the distance between $i$ and $j$ is most $L$. 
Let
$f(i,j) = \P(A(i,j))$. 
Since $t' \leq t/2$, 
it is enough to prove that
if $|i - j| \leq \beta L$ then
\be
\lab{rtp}
f(i,j) \geq \alpha \A L/y.
\ee

Since the probability that either $i$ or $j$ 
is involved in a cut on any given step 
is at most $8/n$, we have 
\begin{equation}
\label{gamgam}
f(i,j) \leq \min(1, 8t'/n).
\end{equation} 
Also, note that
\begin{eqnarray*}
\sum_{i,j} f(i,j) &=&
\sum_{l=m'+1}^{m' + t'} \,\, 
\sum_{i,j} \P(\mbox{
$T(i,j) \geq l$, $|i_{l} - j_{l}| \leq L$, either
$i$ or $j$ is cut at time $l$}) \\
&=&
\sum_{k=1}^{t'}
\sum_{\twosubs{u < v}{|u - v| \leq L}} g(u,v,k),
\end{eqnarray*}
where 
$g(u,v,k)$ is the probability that cards in locations $u$ and $v$ 
are cut at time $m' + k$, but neither had
been cut since time $m'$.
Since the $L$-reversal process is symmetric
it is its own time-reversal. Thus, 
$g(u,v,k)$ is the probability that either location $u$ or $v$ is cut in the first
move, but neither  
the card in location $u$ at time $1$ nor the card in location $v$ at time $1$ is
is cut in the next $k - 1$ moves. This probability  is at least
$\sfrac{1}{n} \Bigl( {n - 8 \over n} \Bigr)^{t'-1}$. Since there are 
$nL$ such pairs $(u, v)$, summing over $u,v$ and $k$ gives
\begin{eqnarray*}
\sum_{i,j} f(i,j) &\geq& 
t' nL 
\frac{1}{n} \Bigl( {n - 8 \over n} \Bigr)^{t'-1} \\
&\geq& c' L t',
\end{eqnarray*}
for a universal constant $c'$,
where the second inequality holds because $t' \leq n$.
It follows that for any $i$ we have
\be
\lab{clt}
\sum_{j} f(i,j) 
= {1 \over n} \sum_{i,j} f(i,j) 
\geq c' L t'/n \,. 
\ee
Let $g(i,j) = \P(A(i,j) \cap B(i,j))$ 
where $B(i,j)$ is the event that
at 
no time before time $T(i,j)$ was the distance between $i$ and $j$ 
greater than $Dy$, where the constant $D$ is to be specified below.
Note that
\begin{eqnarray}
\label{frown}
\sum_j g(i,j) &\geq& \sum_j f(i,j) - 
\P(A(i,j)
\cap B^c(i,j)),
\end{eqnarray}
where 
$B^c(i,j)$ denotes the complement of 
$B(i,j)$.
We claim that
$\sum_{j} g(i,j) 
\geq c L t'/n$
for a universal constant $c$. To see this, 
fix a card $i$ and $k \leq t'$ and say that a card $u$ is 
{\it bad} if
$|i_0 - u_0| \leq L$, and $\max_{0 \leq r \leq m' + k} |i_r - u_r| > Dy$. 
Since the $L$-reversal process is symmetric, 
and the probability that $i$ or $u$ is cut in any given step is at
most
$8/n$, we have
\be
\lab{sstar}
\sum_j \P\Bigl( A(i,j)
\cap B^c(i,j) 
\cap [T(i,j) = m' + k] \Bigr)
\leq {8 \over n} \e(B),
\ee
where $B$ is the number of bad cards.
Let $u$ be a card initially within distance
$L$ of card $i$. If $u_m$ is the position of card 
$u$ at time $m$, then we can write $u_m = u + W_1 + \cdots
W_m \; (\bmod \,\, n)$,
where $W_j \in \{-L, \dots, L\}$ is the displacement of card $u$ at time $j$. 
Define $u'_m = u + W_1 + \cdots
W_m$ (i.e., like $u_m$, but without the $\bmod \,\, n$), with a 
similar definition for $i'_m$. Then $u'_m$ is a
symmetric random walk on the integers. Each step there is a jump with 
probability on the order of $L/n$ and the sizes of jumps are at most
$L$. It follows that for sufficiently large $A$,
the probability that 
$\max_{1 \leq m \leq k} |u'_m - u'| > A  ({kL \over n})^{1/2} L$
can be made arbitrarily close to
zero. Since $k$ is at most a constant times ${y^2n \over L^3}$, 
we have
$A  ({kL \over n})^{1/2} L \leq A' y$ for a constant $A'$. 
A similar argument applies to $i'_m$. Finally, since 
$|i_m - u_m| \leq |i'_m - u'_m|$
 (where the first $| \cdot |$ refers to distance in the $n$-cycle),
it follows that for any $\epsilon > 0$, if $D$ is large enough then 
$\P( 
\max_{1 \leq m \leq k} |i_m - u_m| > Dy) < \epsilon.$
Thus, since there are at most $2L$ cards initially within a distance $L$ of card $i$, we
have $E(B) \leq 2L\epsilon.$ Hence, summing equation (\ref{sstar}) over 
$k \leq t'$ gives
\be
\lab{smiley}
\sum_j \P \Bigl( A(i,j)
\cap B^c(i,j) 
\Bigr)
\leq 16L \epsilon {t'/n} \; . 
\ee
Combining this with equations (\ref{frown})
and (\ref{clt}) gives
\begin{eqnarray}
\label{ssstar}
\sum_j g(i,j) &\geq& 
cL t'/n,
\end{eqnarray}
for a constant $c$, if $\epsilon$ is small enough. We now define $\beta$ to be a constant smaller than $c/32$.
Since for any $j$ we have $g(i,j) \leq f(i,j) \leq 8t'/n$ (by equation (\ref{gamgam})),
we have
$\sum_{j:  |i - j| \leq \beta L} g(i,j) \leq 16 \beta L t'/n \leq cLt'/2n$,
and hence
\[
\sum_{j: |i-j| > \beta L} g(i,j) \geq cLt'/2n,
\]
by equation (\ref{ssstar}). 
Since $g(i,j) = 0$ for $|j-i| > Dy$, the average value of $g(i,j)$, 
where $j$ ranges over values such that $\beta L < |i - j| \leq Dy$, must be at least
${cLt' /4Dyn} \geq {\alpha L \A /y}$, for a constant $\alpha$. 
Since both $Z_m$ and 
the cut-stopped process $Z'_m$ are monotone by Lemmas
\ref{monlem1} 
and
\ref{undom}, the function $g(i,j)$ is 
nonincreasing in $|i-j|$. 
It follows that $g(i,j) \geq
{\alpha L \A /y}$ 
if $|i - j| \leq \beta L$. Since $g \leq f$,
this verifies equation (\ref{rtp}), which 
completes the 
proof of Claim \ref{lclaim}.
\end{proof}
\noindent
Using Claim \ref{lclaim} with $k = \kss$ and applying  
Theorem \ref{maintheorem} gives
\[
\ent( \mu \pt ) - \ent(\mu) \leq {-C \over \log^2 n} \Bigl({t
  \over n} \wedge 1 \Bigr) \ent(\mu),
\]
for a universal constant $C$, 
and the proof of Lemma \ref{tl} is complete.
\end{proofof}

\noindent {\bf Acknowledgments.} I am grateful to  A.~Soshnikov 
for many valuable conversations during the early stages of this work.

\end{document}